 \theoremstyle{definition}
 \theoremstyle{remark}
 \numberwithin{equation}{section}
\newtheorem{theorem}{Theorem}
\newtheorem{lemma}[theorem]{Lemma}
\newtheorem{remark}[theorem]{Remark}
\newcommand{\N}{\mathbb N}
\newcommand{\Z}{\mathbb Z}
\newcommand{\R}{\mathbb R}
\newcommand{\ds}{\displaystyle}
\numberwithin{equation}{section}
\numberwithin{theorem}{section}
\numberwithin{figure}{section}
\begin{document}

%
%
%
%
%
%
%
%
%

\title[Magnetogeostrophic equations]
 {Existence, uniqueness, regularity and instability results for the viscous magneto-geostrophic equation}

\author{Susan Friedlander}

\address{Department of Mathematics\\
University of Southern California}

\email{susanfri@usc.edu}

\author{Anthony Suen}

\address{Department of Mathematics and Information Technology\\
The Hong Kong Institute of Education}

\email{acksuen@ied.edu.hk}

\date{}

\keywords{existence, uniqueness, regularity theory, instabilities, magneto-geostrophic equations}

\subjclass{76D03, 35Q35, 76W05}

\begin{abstract}
We study the three dimensional active scalar equation called the magneto-geostropic equation which was proposed by Moffatt and Loper as a model for the geodynamo processes in the Earth's fluid core. When the viscosity of the fluid is positive, the constitutive law that relates the drift velocity $u(x,t)$ and the scalar temperature $\theta(x,t)$ produces two orders of smoothing. We study the implications of this property. For example, we prove that in the case of the non-diffusive ($\varepsilon_\kappa=0$) active scalar equation, initial data $\theta_0\in L^3$ implies the existence of unique, global weak solutions. If $\theta_0\in W^{s,3}$ with $s>0$, then the solution $\theta(x,t)\in W^{s,3}$ for all time. In the case of positive diffusivity ($\varepsilon_\kappa>0$), even for singular initial data $\theta_0\in L^3$, the global solution is instantaneously $C^\infty$-smoothed and satisfies the drift-diffusion equation classically for all $t>0$. We demonstrate, via a particular example, that the viscous magneto-geostrophic equation permits exponentially growing ``dynamo type" instabilities.
\end{abstract}

\maketitle
\section{Introduction}

Physicists have long realized the importance of the Earth's magnetic field and that this field originates in
the Earth's fluid core. The geodynamo is the process by which the rotating, convecting, electrically conducting
molten iron in the Earth's fluid core maintains the geomagnetic field against ohmic decay. The
convective processes in the core that produce the velocity fields required for dynamo action are a combination
of thermal and compositional convection. The full dynamo problem requires the examination of the full
three dimensional partial differential equations governing convective, incompressible magnetohydrodyamics
(MHD). In the past decades computer models have been used to simulate the actual geodynamo. However, current computers and numerical methods require the imposition of diffusivities that are several orders of
magnitude larger than those which are realistic. It is therefore reasonable to attempt to gain some insight into
the geodynamo by considering a reduction of the full MHD equations to a system that is more tractable, but
one that retains many of the essential features of the problem. The magnetogeostrophic equation proposed
by Moffatt and Loper (\cite{ML94}-\cite{M08}) is one such model. The physical postulates of this model are the
following: slow cooling of the Earth leads to slow solidification of the liquid metal core onto the solid
inner core and releases latent heat of solidification that drives compositional convection in the fluid core.
The arguments for the appropriate ranges of the characteristic length, velocity, and perturbation density are
based on these physical postulates.

We first present the full coupled three-dimensional MHD equations for the evolution of the velocity vector $u(x,t)$, the magnetic field vector $B(x,t)$ and the buoyancy (or temperature) field $\theta(x,t)$ in the Boussinesq approximation
and written in the frame of reference rotating with angular velocity $\Omega$. The physical forces governing this
system are Coriolis forces, Lorentz forces, and gravity. Following the notation of Moffatt and Loper \cite{ML94}
we write the equations in terms of dimensionless variables. The orders of magnitude of the resulting nondimensional
parameters are motivated by the physical postulates of the Moffatt and Loper model:

\begin{align}
N^2[R_0(\partial_t u+u\cdot\nabla u)+e_3\times u]&=-\nabla P+e_2\cdot\nabla b+R_m b\cdot\nabla b+N^2\theta e_3+\varepsilon_\nu \Delta u,\label{1.1-1}\\
R_m[\partial_t b+u\cdot\nabla b-b\cdot\nabla u]&=e_2\cdot\nabla u+\Delta b,\label{1.1-2}\\
\partial_t\theta+u\cdot\nabla\theta&=\varepsilon_\kappa\Delta\theta+S,\label{1.1-3}\\
\nabla\cdot u&=0,\nabla\cdot b=0\label{1.1-4}.
\end{align}

Here $S=S(x,t)$ is a given smooth function that represents the forcing of the system. The mathematical statement of the geodynamo problem asks whether there are initial data for the MHD
system for which the evolution of the perturbation of the magnetic field $b(x,t)$ grows for sufficiently long
time. This can be interpreted as a question of the existence of instabilities of \eqref{1.1-1}-\eqref{1.1-4}.

The notation in \eqref{1.1-1}-\eqref{1.1-4} is the following. The Cartesian unit vectors are denoted by $e_1, e_2$ and $e_3$. For simplicity, we have assumed that the axis of rotation and the gravity $g$ are aligned in the direction of
$e_3$. We have assumed that the magnetic field $B(x,t)$ consists of the sum of an underlying constant field
$B_0e_2$ and a perturbation $b(x,t)$. Our choice of $B_0e_2$ as the underlying magnetic field is consistent with the
models where the magnetic field is believed to be predominantly toroidal (see \cite{ML94}). The dimensionless parameters in \eqref{1.1-1}-\eqref{1.1-4} are $N^2$ (the inverse Elsasser number), $R_0$ (the Rossby number), $R_m$ (the magnetic Reynolds number), $\varepsilon_\nu$ (the inverse square of the Hartman number) and $\varepsilon_\kappa$ (the inverse Peclet number). The definitions of these numbers in terms of the relevant dimensionless quantities are given in \cite{ML94} where the authors argue that in the region of the fluid core that they are modeling the parameters have the following orders of magnitude: $N^2$ is order unity while $R_0$ is order $10^{-3}$ and $R_m$ is relatively small. The Moffatt and Loper model neglects the terms multiplied by $R_0$ and $R_m$ in comparison with the remaining terms. Essentially this means that the evolution equations for the velocity and magnetic field take a simplified ``quasi-static'' form and are linear in the perturbation vector fields $u(x,t)$ and $b(x,t)$. The diffusion parameters $\varepsilon_\nu$, which is proportional to the viscosity $\nu$, and $\varepsilon_\kappa$, which is proportional to the thermal diffusivity $\kappa$, are plausibly extremely small. However their roles are to multiply the Laplacian which is the highest spatial derivatives in the system and these terms are retained in the model.

In the Moffatt-Loper model the dominant balance of the leading order terms in equations \eqref{1.1-1}-\eqref{1.1-2} gives the following reduced system

\begin{align}
N^2 e_3\times u&=-\nabla P+e_2\cdot\nabla b+N^2\theta e_3+\varepsilon_\nu\Delta u,\label{1.1-5}\\
0&=e_2\cdot\nabla u+\Delta b,\label{1.1-6}\\
\nabla\cdot u&=0, \nabla\cdot b=0.\label{1.1-7}
\end{align}
The linear system \eqref{1.1-5}-\eqref{1.1-7} determines the differential operators that relate the vector fields $u(x,t)$ and $b(x,t)$ with the scalar buoyancy (or temperature) $\theta(x,t)$. These operators encode the vestiges of the physics in the problem, namely Coriolis force, Lorentz force, and gravity. Vector manipulations of \eqref{1.1-5}-\eqref{1.1-7} give the expression
\begin{align}\label{1.1-8}
\left\{[\varepsilon_\nu\Delta^2-(e_2\cdot\nabla)^2]^2+N^4(e_3\cdot\nabla)^2\Delta\right\}u=-N^2[\varepsilon_\nu\Delta^2-(e_2\cdot\nabla)^2]&\nabla\times(e_3\times\nabla\theta)\notag\\
\qquad&+N^4(e_3\cdot\nabla)\Delta(e_3\times\nabla\theta).
\end{align}
The sole remaining nonlinearity in the system comes from the coupling of \eqref{1.1-8} with the equation \eqref{1.1-3} for the time evolution of $\theta(x,t)$. We call this nonlinear active scalar equation the {\it magnetogeostrophic} (MG)
equation.

The physically relevant boundary for a model of the Earth's fluid core is a spherical annulus. However for the purposes of studying the mathematical properties of the MG equations we consider the more tractable case of the active scalar equation in the domain $\mathbb{T}^3\times(0,\infty)=[0,2\pi]^3\times(0,\infty)$ (i.e. with periodic boundary conditions). Without loss of generality we may assume that $\int_{\mathbb{T}^3} \theta (x, t) dx = 0$ for all $t \ge 0$, since the mean of $\theta$ is conserved by the flows. We study the active scalar equation
\begin{align}
\label{1.1} \left\{ \begin{array}{l}
\partial_t\theta+u\cdot\nabla\theta=\varepsilon_\kappa\Delta\theta, \\
u=M[\theta],\theta(x,0)=\theta_0(x)
\end{array}\right.
\end{align}
via an examination of the Fourier multiplier operator $M$ obtained from \eqref{1.1-8}, which relates $u$ and $\theta$. More precisely,
\begin{align*}
u_j=M_j [\theta]=(\widehat{M_j}\hat\theta)^\vee
\end{align*}
for $j\in\{1,2,3\}$ and the explicit expression for the components of $\widehat M$ as functions of the Fourier variable $k=(k_1,k_2,k_3)\in\Z^3$ with $k_3\neq0$ are
\begin{align}
\widehat M_1(k)&=[N^4k_2k_3|k|^2-N^2k_1k_3(k_2^2+\varepsilon_\nu|k|^4)]D(k)^{-1},\label{1.1-10}\\
\widehat M_2(k)&=[-N^4k_1k_3|k|^2-N^2k_2k_3(k_2^2+\varepsilon_\nu|k|^4)]D(k)^{-1}\label{1.1-11},\\
\widehat M_3(k)&=[N^2(k_1^2+k_2^2)(k_2^2+\varepsilon_\nu|k|^4)]D(k)^{-1},\label{1.1-12}
\end{align}
where
\begin{align}\label{1.1-13}
D(k)=N^4|k|^2k_3^2+(\varepsilon_\nu|k|^4+k_2^2)^2.
\end{align}
On the set $\{k_3=0\}$, we let $M_j(k)=0$ since for self-consistency of the model we assume that $\theta$ and $u$ both have zero vertical mean. We point out that $k_j\cdot \widehat M_j(k) = 0$ and hence the velocity field $u$ given by \eqref{1.1} is indeed divergence-free.

In a series of recent papers (\cite{FRV12}, \cite{FV11a}, \cite{FV11b}, \cite{FV12}) properties were proved for the {\it inviscid} MG equation (i.e. the system \eqref{1.1}-\eqref{1.1-13} when the viscosity $\varepsilon_\nu$ is set to zero). In this case the Fourier multiplier symbols $\widehat M_{\varepsilon_\nu=0}$, given by \eqref{1.1-10}-\eqref{1.1-12} with $\varepsilon_\nu=0$ are not bounded in all regions of Fourier space. More specifically in ``curved'' regions where $k_3=\mathcal{O}(1)$, $k_2=\mathcal{O}(|k_1|^\frac{1}{2})$ the symbols are unbounded as $|k_1|\rightarrow\infty$ with $|\widehat M_{\varepsilon_\nu=0}(k)|\le C|k|$ for some positive constant $C$. Thus the relation between the velocity field $u$ and the scalar $\theta$ is given by a {\it singular} operator of order 1. The implications of this fact for the inviscid MG equation are summarized in the survey paper Friedlander-Rusin-Vicol \cite{FRV14}. In particular, it is proved that when the thermal diffusivity $\varepsilon_\kappa$ is set to zero, the inviscid MG equation is {\it ill-posed} in the sense of Hadamard in Sobolev spaces. In contrast when $\varepsilon_\kappa>0$, the inviscid but thermally dissipative MG equation is globally well-posed.

In this present article we study the {\it viscous} MG equation (i.e. $\varepsilon_\nu>0$). The situation is then dramatically different because the operator $M$ whose symbols are given by \eqref{1.1-10}-\eqref{1.1-12} now is bounded on $k\in \Z^3 \backslash \{k_3 = 0\}$ and $\lim\limits_{|k| \to \infty} \widehat{M}_j \to \lim\limits_{|k| \to \infty} \frac{1}{\varepsilon_\nu |k|^2}$. Thus the constitutive law for the {\it viscous} MG model produces two orders of smoothing.

We remark that the MG equations fall into a hierarchy of active scalar equations arising in fluid dynamics in terms of the nature of the operator that produces the drift velocity from the scalar field:
\begin{enumerate}
\item for the inviscid MG equation the operator is singular of order 1.
\item for the surface quasi-geostrophic equation the operator is the Riesz transform which is singular of order zero.
\item for the 2D Euler equation in vorticity form the operator is smoothing of degree one.
\item for the viscous MG equation the operator is smoothing of degree two.
\end{enumerate}
In this sense the viscous MG equation, even without thermal diffusion, is ``better behaved'' than the 2D Euler equation. In the following sections of this article we will prove strong properties of the system \eqref{1.1}-\eqref{1.1-13} that are a consequence of this fact.

The paper is organized as follows. In Section~2, we state the main results of this article which are summarized in various theorems. In Section~3, we introduce some notations and recall some useful embeddings which can be found in the literature. In Section 4, we study the case when $\varepsilon_\kappa=0$. We prove that initial data $\theta_0\in L^3 $ implies the existence of unique, global weak solutions; while for $\theta_0\in W^{1,3} $ we obtain the single exponential growth in time on $\|\nabla\theta(\cdot,t)\|_{L^3 }$. In Section~5, we consider the thermally dissipative case ($\varepsilon_\kappa>0$) and prove that the solution $\theta^{\varepsilon_\kappa}(x,t)\in L^3 $ exists for all time. In particular, we show that the solution is instantaneously $C^\infty$-smoothed out and in the class $W^{s,p} $ for all positive time with $s\in[0,1)$ and $p\in (3,\infty)$. In Section~6, we address the convergence of $\theta^{\varepsilon_\kappa}$ as $\varepsilon_\kappa$ approaches to zero. Under the assumption that $\theta_0\in L^3 $, the sequence of solutions $\{\theta^{\varepsilon_\kappa}(x,t)\}_{\varepsilon_\kappa>0}$ to \eqref{1.1}-\eqref{1.1-13} with $\varepsilon_\kappa>0$ converges  to $\theta(x,t)$ weakly in $L^3 $ for all $t>0$, where $\theta(x,t)$ is the solution to \eqref{1.1}-\eqref{1.1-13} when $\varepsilon_\kappa=0$. Moreover, if $\nabla\theta_0\in L^2$, then for all $T>0$, the term $\displaystyle\varepsilon_\kappa\int_0^T\!\!\!\int|\nabla\theta^{\varepsilon_\kappa}|^2$ tends to zero as $\varepsilon_\kappa\rightarrow0$. In Section 7, we illustrate some dynamical features of the {\it forced} MG equation by demonstrating that the Moffalt and Loper model can sustain very rapid exponentially growing instabilities. We examine perturbations of a background temperature gradient and we construct unstable eigenvalues in the spectrum of the linearized equation. We use a suitable modification of the method of continued fractions first introduced in the context of the Navier-Stokes equations by Meshalkin and Sinai \cite{MS61}. The anisotropy of the MG symbols $\widehat{M}_j$ given by (1.10)-(1.13) permits eigenvalues that are very large when the parameters $\varepsilon_\nu$ and $\varepsilon_\kappa$ are extremely small.

\section{Main results}

We now give a precise formulation of our results. The following theorems will be proved in Section~4 to Section~6.

\begin{theorem} Assume that $\theta_0\in L^3$ has mean zero on $\mathbb{T}^3$ and $\varepsilon_\kappa=0$. There exists unique global weak solution to \eqref{1.1}-\eqref{1.1-13} such that
\begin{align}
\theta&\in BC((0,\infty);L^3),\label{1.3}\\
u&\in C((0,\infty);W^{2,3}).\label{1.4}
\end{align}
In particular, $\theta(\cdot,t)\rightarrow\theta_0$ weakly in $L^3$ as $t\rightarrow0^+$. Here $BC$ stands for {\it ``bounded continuous functions''}.
\end{theorem}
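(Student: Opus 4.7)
The plan is to build a global weak solution by vanishing mollification, to turn weak compactness of $\theta$ into strong compactness of $u$ via the two orders of smoothing of $M$, and to close uniqueness through a DiPerna-Lions type renormalization/commutator argument.

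First I would mollify the datum, $\theta_0^\epsilon:=\rho_\epsilon\ast\theta_0$, and solve the regularized problem locally in $H^s$, $s>5/2$, by Picard iteration; this is routine since the symbols \eqref{1.1-10}-\eqref{1.1-13} are smooth off $\{k_3=0\}$ and decay like $|k|^{-2}$, so by Mihlin-Hörmander $M:H^s\to H^{s+2}$. Testing the smooth equation against $|\theta^\epsilon|\theta^\epsilon$ and using $\divv u^\epsilon=0$ would yield
\[
\tfrac{d}{dt}\|\theta^\epsilon(t)\|_{L^3}^3 = -\int u^\epsilon\cdot\nabla|\theta^\epsilon|^3\,dx=0,
\]
making the smooth solution global with $\|\theta^\epsilon(t)\|_{L^3}\le\|\theta_0\|_{L^3}$. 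Mihlin-Hörmander at the $L^3$ level, together with $W^{2,3}(\mathbb T^3)\hookrightarrow L^\infty\cap W^{1,q}$ for $q<\infty$, then bounds $u^\epsilon$ uniformly in $L^\infty_tW^{2,3}\cap L^\infty_{t,x}$.

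To extract a limit I would write the equation in conservative form $\partial_t\theta^\epsilon=-\divv(u^\epsilon\theta^\epsilon)$, getting $\partial_t\theta^\epsilon$ uniformly in $L^\infty_tW^{-1,3}$, and hence $\partial_tu^\epsilon=M[\partial_t\theta^\epsilon]$ uniformly in $L^\infty_tW^{1,3}$ thanks to the two orders of smoothing. Aubin-Lions (using the compact embedding $W^{2,3}\hookrightarrow W^{1,q}\hookrightarrow L^\infty$ for $q>3$) then extracts a subsequence with $u^\epsilon\to u$ strongly in $C_tL^\infty$ and $\theta^\epsilon\rightharpoonup^*\theta$ in $L^\infty_tL^3$, so the nonlinear term $u^\epsilon\theta^\epsilon$ passes to the limit distributionally and $(\theta,u)$ is a weak solution. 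Strong continuity $\theta\in C((0,\infty);L^3)$ will follow from the $L^3$ renormalization identity (which gives $\|\theta(t)\|_{L^3}$ constant on $(0,\infty)$) combined with weak time-continuity and the Radon-Riesz theorem, whence $u=M[\theta]\in C((0,\infty);W^{2,3})$ by continuity of $M$.

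For uniqueness, given two weak solutions $\theta_1,\theta_2$ I would set $\bar\theta=\theta_1-\theta_2$, $\bar u=M[\bar\theta]$ and work with
\[
\partial_t\bar\theta + \divv(u_1\bar\theta) + \divv(\bar u\,\theta_2) = 0,\qquad \bar\theta(0)=0.
\]
Since $u_1\in L^\infty_tW^{1,3}_x$ is divergence-free and $\bar\theta\in L^\infty_tL^3$ with $1/3+1/3\le1$, the DiPerna-Lions commutator lemma applies to the transport part; mollifying and testing against $\bar\theta_\delta$ would kill the transport contribution in the limit and leave a coupling term controlled via $\|\bar u\|_{L^\infty}\le C\|\bar\theta\|_{L^3}$, after which a Gronwall-type argument forces $\bar\theta\equiv 0$. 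The hardest step will be this last one: the source $\divv(\bar u\,\theta_2)$ lies only in $W^{-1,3}$, so the formal energy identity cannot be closed by a textbook integration by parts and must be justified through the DiPerna-Lions commutator machinery, with the two orders of smoothing of $M$ just enough to balance the bare $L^3$-regularity of $\theta_2$ and $\bar\theta$.
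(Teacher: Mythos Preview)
Your existence argument takes a different route from the paper's and is essentially correct. The paper works Lagrangianly: it first establishes the bound $\|u(\cdot,t)\|_{L.L.}\le C\|\theta_0\|_{L^3}$ (Lemma~4.2, via $u\in W^{2,3}$, $\nabla u\in W^{1,3}\hookrightarrow BMO$, hence $u$ Log-Lipschitz), uses this to obtain equicontinuity of the flow maps $\psi_n$ generated by the mollified velocities, extracts a limiting measure-preserving homeomorphism $\psi$ by Arzel\`a--Ascoli, and sets $\theta(x,t)=\theta_0(\psi_t^{-1}(x))$. Your Eulerian Aubin--Lions scheme is a legitimate alternative: the two-order smoothing of $M$ does upgrade weak compactness of $\theta^\epsilon$ to strong compactness of $u^\epsilon$ in $C_tL^\infty$, and the Radon--Riesz step for strong $L^3$-continuity is sound once renormalization yields conservation of $\|\theta(t)\|_{L^3}$.

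Your uniqueness argument, however, has a genuine gap. The DiPerna--Lions commutator does dispose of the transport piece $u_1\cdot\nabla\bar\theta$, but after testing against $\bar\theta_\delta$ the coupling term reads
\[
-\int \divv(\bar u\,\theta_2)\,\bar\theta_\delta \;=\;\int (\bar u\,\theta_2)\cdot\nabla\bar\theta_\delta,
\]
and no integration by parts bounds this by a multiple of $\|\bar\theta\|_{L^p}^2$ (for any $p$) uniformly in $\delta$: one derivative must land on either $\theta_2$ or $\bar\theta$, and both lie only in $L^3$. The two orders of smoothing give $\bar u\in W^{2,3}\hookrightarrow L^\infty$ but \emph{not} $\nabla\bar u\in L^\infty$ (only $BMO$), so this is precisely the borderline where a straight Gronwall fails---the exact analogue of 2D Euler with vorticity in $L^p$, $p<\infty$, where uniqueness is famously open. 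The paper accordingly proves uniqueness by an entirely different, Yudovich-type mechanism: a losing estimate in the Besov scale $B^{-\delta}_{\infty,\infty}$ with time-dependent index $\delta_t=\mathcal C\int_0^t(\|u^1\|_{\overline{L.L.}}+\|u^2\|_{\overline{L.L.}})\,d\tau$, following Theorem~3.28 of Bahouri--Chemin--Danchin. The Log-Lipschitz bound on $u$---which you never invoke---is the essential input; DiPerna--Lions renormalization plus an ordinary Gronwall inequality is not enough at this critical endpoint.
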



\begin{theorem} Assume that $\theta_0\in W^{s,3}$ has mean zero on $\mathbb{T}^3$ with $s>0$ and $\varepsilon_\kappa=0$. There exists a unique solution to \eqref{1.1}-\eqref{1.1-13} such that $\theta(\cdot,t)\in W^{s,3}$ for all $t\ge0$ with $\theta(\cdot,t)\rightarrow\theta_0$ weakly in $L^3$ as $t\rightarrow0^+$. In particular, for $s=1$, we have the following single exponential growth in time on $\|\nabla\theta(\cdot,t)\|_{L^3}$:
\begin{align}
\|\nabla\theta(\cdot,t)\|_{L^3}\le \mathcal{C}_1 \|\nabla\theta_0\|_{L^3}\exp\left(t\,\mathcal{C}_2\|\theta_0\|_{W^{1,3}}\right),\label{1.5}
\end{align}
where $\mathcal{C}_1,\mathcal{C}_2>0$ are constants which depend only on the spatial dimension.
\end{theorem}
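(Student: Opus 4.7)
The plan is to build on Theorem~2.1: since a $W^{s,3}$ solution lies in particular in the $L^3$ existence/uniqueness class already handled there, uniqueness comes for free and the task reduces to producing a solution whose $W^{s,3}$ norm is propagated from $\theta_0$. I would regularise the initial data, solve the resulting smooth problem using the machinery of Theorem~2.1 (or a Galerkin approximation), derive a-priori $W^{s,3}$ bounds uniform in the regularisation parameter, and pass to the limit by compactness. The substantive content is therefore the a-priori estimate.

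For $s=1$, applying $\partial_i$ to \eqref{1.1} with $\varepsilon_\kappa=0$ gives
\begin{align*}
\partial_t(\partial_i\theta)+u\cdot\nabla(\partial_i\theta)=-(\partial_i u)\cdot\nabla\theta.
\end{align*}
Testing against $|\partial_i\theta|\,\partial_i\theta$, summing in $i$, and using $\nabla\cdot u=0$ to annihilate the transport term produces
\begin{align*}
\frac{d}{dt}\|\nabla\theta\|_{L^3}\le C\,\|\nabla u\|_{L^\infty}\,\|\nabla\theta\|_{L^3},
\end{align*}
so \eqref{1.5} will follow from Grönwall once $\|\nabla u(\cdot,t)\|_{L^\infty}$ is controlled by $\|\theta_0\|_{W^{1,3}}$. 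Because the symbols \eqref{1.1-10}--\eqref{1.1-12} decay like $|k|^{-2}$, the operator $\nabla M$ is a Fourier multiplier of order $-1$ and (by a Marcinkiewicz/Mikhlin-type argument on $\mathbb{T}^3$) maps $L^q\to W^{1,q}$ for every $q\in(1,\infty)$; combined with the Sobolev embedding $W^{1,q}(\mathbb{T}^3)\hookrightarrow L^\infty$ valid for $q>3$, this yields $\|\nabla u(\cdot,t)\|_{L^\infty}\le C_q\,\|\theta(\cdot,t)\|_{L^q}$. Since \eqref{1.1} with $\varepsilon_\kappa=0$ is a pure transport equation with divergence-free drift, every $L^q$ norm of $\theta$ is conserved in time; fixing any $q\in(3,\infty)$ and using the borderline embedding $W^{1,3}(\mathbb{T}^3)\hookrightarrow L^q$ for $q<\infty$ one obtains $\|\nabla u(\cdot,t)\|_{L^\infty}\le\mathcal{C}_2\,\|\theta_0\|_{W^{1,3}}$, which closes the Grönwall argument and produces exactly the single exponential \eqref{1.5}.

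For general $s>0$ I would apply $\Lambda^s=(-\Delta)^{s/2}$ to \eqref{1.1}, rewrite the right-hand side as a commutator, and invoke a Kato--Ponce type estimate
\begin{align*}
\|[\Lambda^s,u\cdot\nabla]\theta\|_{L^3}\le C\bigl(\|\nabla u\|_{L^\infty}\|\Lambda^s\theta\|_{L^3}+\|\Lambda^s u\|_{L^{r_1}}\|\nabla\theta\|_{L^{r_2}}\bigr)
\end{align*}
for a Hölder pair $(r_1,r_2)$. The two-order smoothing of $M$ turns $\Lambda^s u$ and $\nabla u$ into norms of $\theta$ at one or two derivatives below, which can be controlled by interpolation between $\|\theta_0\|_{L^3}$ (conserved by transport) and the $\|\Lambda^s\theta\|_{L^3}$ one is trying to propagate; Grönwall then closes the bound.

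The main obstacle is keeping the estimate genuinely single-exponential rather than double-exponential. A naive bound $\|\nabla u\|_{L^\infty}\lesssim\|\theta\|_{W^{1,3}}$ obtained from $W^{3,3}\hookrightarrow C^1$ would reintroduce the evolving $\|\nabla\theta\|_{L^3}$ on the right-hand side and yield only a double exponential; the remedy is the interplay between the two-order smoothing of $M$ and the $L^q$-conservation of pure transport, which allows the $L^\infty$ cost to be paid by a time-independent $L^q$ norm of the initial data. Justifying the formal identities on the weak $L^3$ solution of Theorem~2.1 is a separate but routine issue, handled by performing the computation on the smooth approximations and passing to the limit with weak lower semicontinuity of the $W^{s,3}$ norm.
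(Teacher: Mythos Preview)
Your approach is essentially the paper's: regularise, derive an a~priori $W^{s,3}$ bound via Gr\"onwall driven by $\|\nabla u\|_{L^\infty}$, observe that the two-order smoothing of $M$ together with $L^q$-conservation under divergence-free transport and the embedding $W^{s,3}\hookrightarrow L^q$ (some $q>3$) pins $\|\nabla u(\cdot,t)\|_{L^\infty}$ to a \emph{time-independent} constant multiple of $\|\theta_0\|_{W^{s,3}}$, and pass to the limit. The $s=1$ argument and the single-exponential mechanism match the paper exactly.

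One caveat on the general-$s$ step: the Kato--Ponce form you wrote carries the term $\|\Lambda^s u\|_{L^{r_1}}\|\nabla\theta\|_{L^{r_2}}$, and for $0<s<1$ the factor $\|\nabla\theta\|_{L^{r_2}}$ cannot be interpolated between $\|\theta\|_{L^3}$ and $\|\Lambda^s\theta\|_{L^3}$ as you suggest, since it sits \emph{above} $\Lambda^s\theta$ in regularity. The paper avoids this by (implicitly) using the transport-commutator estimate in the form $\|[\Lambda^s,u\cdot\nabla]\theta\|_{L^3}\le C\|\nabla u\|_{L^\infty}\|\Lambda^s\theta\|_{L^3}$, which for $0<s<1$ follows directly from the Besov/finite-difference characterisation of $W^{s,3}$ and needs only the Lipschitz bound on $u$. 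Replace your two-term estimate by this one (or restrict your Kato--Ponce argument to $s\ge 1$) and the proof goes through.
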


\begin{theorem} Assume that $\varepsilon_\kappa>0$ in \eqref{1.1}-\eqref{1.1-13}. Let $\theta_0\in L^3$ be given and has mean zero on $\mathbb{T}^3$. Then there exists a unique global-in-time mild solution $\theta^{\varepsilon_\kappa}$ to \eqref{1.1}-\eqref{1.1-13} such that
\begin{align}
\theta^{\varepsilon_\kappa}&\in BC((0,\infty);L^3)\label{1.4-1},\\
t^{\frac{s}{2}+\frac{1}{2}-\frac{3}{2p}}\theta^{\varepsilon_\kappa}&\in C((0,\infty);\dot{W}^{s,p})\label{1.4-2},
\end{align}
for all $s,p$ satisfying $s\in[0,1)$ and $p\in(3,\infty)$. In particular, $\theta^{\varepsilon_\kappa}(\cdot,t)\rightarrow\theta_0$ in $L^3$ as $t\rightarrow0^+$ and $\|\theta^{\varepsilon_\kappa}(\cdot,t)\|_{\dot{W}^{s,p}}\rightarrow0$ as $t\rightarrow\infty$.
\end{theorem}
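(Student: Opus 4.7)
The plan is to construct $\theta^{\varepsilon_\kappa}$ as a mild solution to \eqref{1.1}--\eqref{1.1-13} via a Kato-type contraction argument built around the two orders of smoothing provided by $M$. Using $\nabla\cdot u=0$ to rewrite $u\cdot\nabla\theta=\nabla\cdot(M[\theta]\theta)$ and Duhamel's formula with the heat semigroup $S(t)=e^{\varepsilon_\kappa t\Delta}$, I would seek a fixed point of
\begin{equation*}
\theta(t)=S(t)\theta_0-\int_0^t S(t-\tau)\nabla\!\cdot\!\bigl(M[\theta]\,\theta\bigr)(\tau)\,d\tau
\end{equation*}
in the Kato space
\begin{equation*}
X_T=\Bigl\{\theta:\sup_{0<t\le T}\|\theta(t)\|_{L^3}+\sup_{0<t\le T}t^{\alpha_\star}\|\theta(t)\|_{L^{p_\star}}<\infty\Bigr\},\qquad \alpha_\star=\tfrac12-\tfrac{3}{2p_\star},
\end{equation*}
for one reference exponent $p_\star\in(3,\infty)$, recovering the auxiliary $(s,p)$ bounds by a subsequent bootstrap.

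The linear estimate $\|S(t)\theta_0\|_{\dot W^{s,p}}\lesssim(\varepsilon_\kappa t)^{-(s/2+1/2-3/(2p))}\|\theta_0\|_{L^3}$ is the standard heat-kernel bound. The bilinear estimate rests on the decisive structural point of the viscous case: the symbols in \eqref{1.1-10}--\eqref{1.1-12} satisfy $|\widehat M_j(k)|\lesssim|k|^{-2}$ as $|k|\to\infty$, so Mikhlin--Calder\'on--Zygmund yields $M:L^q\to W^{2,q}$ for every $q\in(1,\infty)$, and the Sobolev embedding $W^{2,3}\hookrightarrow L^\infty$ on $\mathbb{T}^3$ then gives $\|M[\theta]\|_{L^\infty}\lesssim\|\theta\|_{L^3}$. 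Combining this with $\|\nabla\!\cdot\!S(t)f\|_{L^{p_\star}}\lesssim(\varepsilon_\kappa t)^{-1/2}\|f\|_{L^{p_\star}}$ and $\|M[\theta]\phi\|_{L^{p_\star}}\le\|M[\theta]\|_{L^\infty}\|\phi\|_{L^{p_\star}}$ reduces the bilinear term to a Beta-function integral in $\tau$, producing an extra smallness factor $T^{1/2}$ that closes the contraction on some interval $[0,T]$ whose length is in fact independent of $\|\theta_0\|_{L^3}$.

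For global existence, the $L^3$ energy identity does the job: multiplying \eqref{1.1-3} by $|\theta|\theta$ and integrating, the transport term vanishes because $\nabla\cdot u=0$ and the diffusion contributes $-6\varepsilon_\kappa\int|\theta||\nabla\theta|^2\le0$, so $\|\theta(t)\|_{L^3}$ is non-increasing and the local mild solution extends to all $t>0$. Uniqueness in this class follows by a standard Gronwall argument applied to the difference of two mild solutions, controlling $\|\theta_1-\theta_2\|_{L^3}$ using the same bilinear estimate. To upgrade to the full weighted statement $t^{s/2+1/2-3/(2p)}\theta\in C((0,\infty);\dot W^{s,p})$ for every $s\in[0,1)$ and $p\in(3,\infty)$, I would bootstrap: for any $t_0>0$ the solution lies in $L^{p_\star}$, and a second Duhamel iteration started at $t_0$ with the heat-kernel derivative estimate propagates this to every $\dot W^{s,p}$ with the prescribed temporal weight, while continuity in time follows from the strong continuity of $S(t)$ on $L^3$.

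For the long-time decay $\|\theta^{\varepsilon_\kappa}(t)\|_{\dot W^{s,p}}\to0$, testing the equation with $\theta$ and using the Poincar\'e inequality on $\mathbb T^3$ for mean-zero fields yields $\tfrac{d}{dt}\|\theta\|_{L^2}^2\le-2\varepsilon_\kappa C_P^{-1}\|\theta\|_{L^2}^2$, hence exponential $L^2$ decay. Interpolating $\|\theta\|_{L^3}\le\|\theta\|_{L^2}^{2/3}\|\theta\|_{L^\infty}^{1/3}$ with the uniform $L^\infty$ bound from the smoothing step produces $L^3$-decay, and applying the smoothing estimate of the previous paragraph on the interval $[t/2,t]$ transfers this decay to every $\dot W^{s,p}$ norm. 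The main technical hurdle I anticipate is the bilinear estimate at the critical $L^3$ scaling: it is precisely the two orders of smoothing built into the viscous MG constitutive law (in contrast with $0$ orders for SQG and $-1$ for the inviscid MG) that makes $\|M[\theta]\|_{L^\infty}\lesssim\|\theta\|_{L^3}$ hold, and this single inequality is what reduces the nonlinear drift-diffusion problem to behavior essentially as tame as the pure heat equation.
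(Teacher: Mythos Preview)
Your approach is essentially the paper's: both set up the mild formulation via Duhamel, run a Kato-type fixed-point argument in a time-weighted $L^p$ space, exploit the structural fact $|\widehat M_j(k)|\lesssim |k|^{-2}$ to obtain $\|M[\theta]\|_{L^\infty}\lesssim\|\theta\|_{L^q}$ (you take $q=3$ via $W^{2,3}\hookrightarrow L^\infty$, the paper takes $q=p>3$), and globalize using the monotonicity of $\|\theta(\cdot,t)\|_{L^p}$ under divergence-free drift. Two remarks. First, your parenthetical that the local time $T$ is ``independent of $\|\theta_0\|_{L^3}$'' is not correct in the scheme you describe: the bilinear term does gain a factor $T^{1/2}$, but the linear piece $S(t)\theta_0$ contributes $\|\theta_0\|_{L^3}$ to the $X_T$-norm, so the contraction requires roughly $T^{1/2}\|\theta_0\|_{L^3}\ll 1$; this is harmless for the global extension since $\|\theta\|_{L^3}$ is non-increasing. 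Second, the one substantive methodological difference is the long-time decay. The paper runs a Moser-type iteration on the $L^{q_k}$ norms with $q_k=3\cdot 2^k$, using Gagliardo--Nirenberg together with the energy inequality, to obtain the sharp algebraic rate $\|\theta(t)\|_{L^\infty}\le C\|\theta_0\|_{L^3}\,t^{-1/2}$ valid for \emph{all} $t>0$, and then feeds this (and the interpolated $L^p$ rate) directly back into Duhamel to get the uniform bound $t^{s/2+1/2-3/(2p)}\|\theta(t)\|_{\dot W^{s,p}}\le C$ in one stroke. Your Poincar\'e argument is simpler and yields exponential decay for large $t$, but it leans on the periodic mean-zero setting, whereas the paper's iteration would carry over unchanged to $\mathbb R^3$.
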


\begin{remark} If we assume $\theta_0\in \dot{W}^{\tilde{s},3}$ for $\tilde{s}>0$, then by a similar argument given in the proof of Theorem~2.3, one can show that there exists a unique global-in-time mild solution $\theta^{\varepsilon_\kappa}$ to \eqref{1.1}-\eqref{1.1-13} with
\begin{align*}
\theta^{\varepsilon_\kappa}&\in BC((0,\infty);L^3 ),\\
t^{\frac{-(s-\tilde{s})}{2}+\frac{1}{2}-\frac{3}{2p}}\theta^{\varepsilon_\kappa}&\in C((0,\infty);\dot{W}^{s,p} ),
\end{align*}
for all $s,p$ satisfying $s\in[\tilde{s},\tilde{s}+1)$ and $p\in(3,\infty)$, and $\|\theta^{\varepsilon_\kappa}(\cdot,t)\|_{\dot{W}^{s,p} }\rightarrow0$ as $t\rightarrow\infty$.
\end{remark}

\begin{theorem}
Let $\theta_0\in L^3 $ be given and has mean zero on $\mathbb{T}^3$. Let $\theta^{\varepsilon_\kappa}$ be the solution to \eqref{1.1}-\eqref{1.1-13} when $\varepsilon_\kappa>0$ with initial data $\theta_0$ as obtained in Theorem~2.3. There exists a sequence $\{\varepsilon_{\kappa_n}\}_{n\in\N}$ with $\displaystyle\lim_{n\rightarrow\infty}\varepsilon_{\kappa_n}=0$ such that,
\begin{align}\label{1.6}
\mbox{$\theta^{\varepsilon_{\kappa_n}}(\cdot,t)\rightarrow\theta(\cdot,t)$ weakly in $L^3 $ as $n\rightarrow\infty$, for every $t\ge0$,}
\end{align}
where $\theta$ is the solution to \eqref{1.1}-\eqref{1.1-13} when $\varepsilon_\kappa=0$ with initial data $\theta_0$ as obtained in Theorem~2.1.

Moreover, if we further assume that $\nabla\theta_0\in L^2 $, then for any $T>0$, we have
\begin{align}\label{1.7}
\lim_{\varepsilon_\kappa\rightarrow0}\varepsilon_\kappa\int_0^T\!\!\!\int|\nabla\theta^{\varepsilon_\kappa}|^2dxds=0.
\end{align}
\end{theorem}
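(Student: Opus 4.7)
The approach is to extract uniform-in-$\varepsilon_\kappa$ estimates from the $L^p$-energy identity, pass to a weak-in-$L^3$ limit using the two orders of smoothing of $M$, identify that limit via the uniqueness in Theorem~2.1, and finally deduce (1.7) from the $L^2$-energy identity combined with $L^2$-conservation for the inviscid limit. Testing (1.1) against $|\theta^{\varepsilon_\kappa}|^{p-2}\theta^{\varepsilon_\kappa}$ and using $\nabla\cdot u^{\varepsilon_\kappa}=0$ yields $\|\theta^{\varepsilon_\kappa}(\cdot,t)\|_{L^p}\le\|\theta_0\|_{L^p}$ for $p=2,3$, together with the $L^2$-identity
\begin{equation*}
\tfrac{1}{2}\|\theta^{\varepsilon_\kappa}(\cdot,T)\|_{L^2}^2+\varepsilon_\kappa\int_0^T\!\!\!\int|\nabla\theta^{\varepsilon_\kappa}|^2\,dx\,ds=\tfrac{1}{2}\|\theta_0\|_{L^2}^2.
\end{equation*}
The smoothing property of $M$ noted after (1.13) gives $\|u^{\varepsilon_\kappa}\|_{L^\infty_t W^{2,3}_x}\le C\|\theta_0\|_{L^3}$, and by the Sobolev embedding $W^{2,3}(\mathbb{T}^3)\hookrightarrow C^{1,\alpha}$ the drifts are uniformly Lipschitz in~$x$.

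For the weak convergence at every $t$, I would rewrite the equation as $\partial_t\theta^{\varepsilon_\kappa}=-\nabla\cdot(u^{\varepsilon_\kappa}\theta^{\varepsilon_\kappa})+\varepsilon_\kappa\Delta\theta^{\varepsilon_\kappa}$ and test against $\phi\in C^\infty(\mathbb{T}^3)$: the uniform bounds above yield
\begin{equation*}
\bigl|\langle\theta^{\varepsilon_\kappa}(\cdot,t_2)-\theta^{\varepsilon_\kappa}(\cdot,t_1),\phi\rangle\bigr|\le C(\phi)\,|t_2-t_1|.
\end{equation*}
Combined with $\sup_t\|\theta^{\varepsilon_\kappa}(\cdot,t)\|_{L^3}\le\|\theta_0\|_{L^3}$ and a diagonal Arzel\`a--Ascoli argument over a countable dense family of test functions, this produces $\varepsilon_{\kappa_n}\to 0$ and $\tilde\theta\in L^\infty((0,\infty);L^3)$ with $\theta^{\varepsilon_{\kappa_n}}(\cdot,t)\rightharpoonup\tilde\theta(\cdot,t)$ weakly in $L^3$ for every $t\ge 0$. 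Because $M\colon L^3\to W^{2,3}$ is bounded and $W^{2,3}\hookrightarrow W^{1,3}$ compactly, the associated drifts $u^{\varepsilon_{\kappa_n}}(\cdot,t)$ converge \emph{strongly} in $W^{1,3}$ to $\tilde u(\cdot,t)=M[\tilde\theta(\cdot,t)]$, which suffices to pass to the limit in the nonlinearity $\nabla\cdot(u^{\varepsilon_\kappa}\theta^{\varepsilon_\kappa})$; the viscous term $\varepsilon_\kappa\Delta\theta^{\varepsilon_\kappa}$ vanishes in $\mathcal D'$ directly from the $L^2$-identity. Thus $\tilde\theta$ is a weak $L^3$-solution of (1.1) with $\varepsilon_\kappa=0$ and initial data $\theta_0$, and Theorem~2.1 forces $\tilde\theta=\theta$, which gives (1.6).

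For (1.7), the $L^2$-identity rewrites as $\varepsilon_{\kappa_n}\int_0^T\!\int|\nabla\theta^{\varepsilon_{\kappa_n}}|^2\,dx\,ds=\tfrac{1}{2}(\|\theta_0\|_{L^2}^2-\|\theta^{\varepsilon_{\kappa_n}}(\cdot,T)\|_{L^2}^2)$. By (1.6) and weak lower semicontinuity, $\liminf_n\|\theta^{\varepsilon_{\kappa_n}}(\cdot,T)\|_{L^2}\ge\|\theta(\cdot,T)\|_{L^2}$. The decisive input is $L^2$-conservation for the inviscid limit, $\|\theta(\cdot,T)\|_{L^2}=\|\theta_0\|_{L^2}$, which I would prove by observing that $\tilde u\in L^\infty_t W^{2,3}_x\subset L^\infty_t\mathrm{Lip}_x$ is divergence-free, so its flow is a volume-preserving $C^1$-diffeomorphism and $\theta(\cdot,t)$ is the measure-preserving pushforward of $\theta_0$ along it, conserving every $L^p$-norm. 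Combining these,
\begin{equation*}
\limsup_{n\to\infty}\varepsilon_{\kappa_n}\int_0^T\!\!\!\int|\nabla\theta^{\varepsilon_{\kappa_n}}|^2\,dx\,ds\le 0.
\end{equation*}
Since every subsequence of $\{\varepsilon_\kappa\}$ admits a further subsequence to which the compactness argument applies and identifies the same unique inviscid limit, the full sequence satisfies (1.7). The main obstacle is precisely this $L^2$-conservation: at mere $L^3$-regularity of $\theta$ it would require DiPerna--Lions renormalization, but the $W^{2,3}$-regularity of $\tilde u$ supplied by the two orders of smoothing of $M$ renders the characteristic ODEs classical and the argument elementary.
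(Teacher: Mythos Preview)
Your treatment of \eqref{1.6} is essentially the paper's own argument: uniform $L^3$ bounds from the energy inequality, equicontinuity in time of $\langle\theta^{\varepsilon_\kappa}(\cdot,t),\phi\rangle$, a diagonal extraction, strong convergence of the drifts via the smoothing of $M$, and identification of the limit through the uniqueness in Theorem~2.1.

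There is, however, a technical slip that you should fix. The embedding $W^{2,3}(\mathbb{T}^3)\hookrightarrow C^{1,\alpha}$ is \emph{false} in three dimensions: $\nabla u\in W^{1,3}$ sits in the critical case and embeds only into $BMO$ and all $L^q$, $q<\infty$, not into $L^\infty$. Hence the drift is not Lipschitz and the flow is not a $C^1$-diffeomorphism. The correct statement, which the paper uses in Section~4 via \eqref{2.1}--\eqref{2.2}, is that $u\in W^{2,3}$ is Log--Lipschitz; by the Osgood/Yudovich theory its flow is a Lebesgue measure preserving homeomorphism (exactly as established in the proof of Theorem~2.1). That weaker regularity still yields $\theta(\cdot,t)=\theta_0\circ\psi_t^{-1}$ with $\psi_t^{-1}$ measure-preserving, and hence the $L^2$-conservation you need. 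So your argument survives once ``Lipschitz'' and ``$C^1$-diffeomorphism'' are replaced by ``Log--Lipschitz'' and ``measure-preserving homeomorphism''.

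For \eqref{1.7} your route is genuinely different from the paper's. The paper uses the extra hypothesis $\nabla\theta_0\in L^2$ to differentiate the equation, apply Gr\"onwall, and obtain a uniform-in-$\varepsilon_\kappa$ bound $\|\nabla\theta^{\varepsilon_\kappa}(\cdot,t)\|_{L^2}\le C(T,\theta_0)$, so that $\varepsilon_\kappa\int_0^T\!\int|\nabla\theta^{\varepsilon_\kappa}|^2\le \varepsilon_\kappa\, C(T,\theta_0)\to 0$ directly. Your argument instead combines the exact $L^2$-energy identity with $L^2$-conservation of the inviscid limit and weak lower semicontinuity. This is more elementary, avoids the gradient Gr\"onwall step, and in fact does not use the assumption $\nabla\theta_0\in L^2$ at all (on $\mathbb{T}^3$ one has $L^3\hookrightarrow L^2$); so, once the embedding error above is corrected, your proof actually delivers a slightly stronger conclusion than stated. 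The paper's approach, on the other hand, buys a quantitative bound on $\int_0^T\!\int|\nabla\theta^{\varepsilon_\kappa}|^2$ itself (not merely on $\varepsilon_\kappa$ times it), at the price of the additional hypothesis.
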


\section{Preliminaries}

We introduce the following notations. We say $(\theta,u)$ is a weak solution to \eqref{1.1}-\eqref{1.1-13} for $\varepsilon_\kappa=0$ if they solve the system in the weak sense, that means for all $\phi\in C^{\infty}_0(\mathbb{T}^3\times(0,\infty),\R^3)$, we have
\begin{align*}
\int_0^\infty\int_{\mathbb{T}^3}(\partial_t \phi+u\cdot\nabla\phi)\theta(x,t)dxdt+\int_{\mathbb{T}^3}\phi(x,0)\theta_0(x)dx=0.
\end{align*}
$W^{s,p}$ and $\dot{W}^{s,p}$ are the usual inhomogeneous Sobolev space and homogeneous Sobolev space with norm $\|\cdot\|_{W^{s,p} }$ and $\|\cdot\|_{\dot{W}^{s,p} }$ respectively. We also define $\|\cdot\|_{L.L. }$ to be the Log-Lipschitz norm given by
\begin{align*}
\|f\|_{L.L. }=\sup_{x\neq y}\frac{|f(x)-f(y)|}{|x-y||(1+|\log|x-y||)}.
\end{align*}

We recall the following facts from the literature (see for example Azzam-Bedrossian \cite{AB11}, Bahouri-Chemin-Danchin \cite{BCD11} and Ziemer \cite{Z89}): there exists a constant $C>0$ such that

\begin{align}
\|f\|_{L.L. }&\le C \|\nabla f\|_{BMO },\label{2.2}\\
\|f\|_{BMO }&\le C \|f\|_{W^{1,3} },\label{2.1}\\
\|f\|_{L^\infty }&\le C \|f\|_{W^{2,3} },\label{2.3}\\
\|f\|_{L^6 }&\le C\|\nabla f\|_{L^2 }, \label{2.3-1}
\end{align}
and for $p\ge1$ and $q>3$, there are constants $C(p),C(q)>0$ such that
\begin{align}
\|f\|_{L^\infty }&\le C(q) \|f\|_{W^{1,q} }.\label{2.6}
\end{align}
For simplicity, we write $\|\cdot\|_{L^p}=\|\cdot\|_{L^p(\mathbb{T}^3)}$, $\|\cdot\|_{W^{s,p}}=\|\cdot\|_{W^{s,p}(\mathbb{T}^3)}$, etc. unless otherwise specified.

\section{Non-diffusive case when $\varepsilon_\kappa=0$}

We study the non-diffusive case when $\varepsilon_\kappa=0$ in \eqref{1.1}-\eqref{1.1-13}. We first prove the global-in-time well-posedness of \eqref{1.1}-\eqref{1.1-13} in the Lebesgue space $L^3 $ without any smallness conditions. Here $L^3 $ is the {\it critical} Lebesgue space with respect to the natural scaling of the system \eqref{1.1}-\eqref{1.1-13} in the sense that if $\theta(x,t)$ is a solution, then $\theta_\lambda(x,t)=\lambda^3\theta(\lambda x,\lambda^2 t)$ is also a solution with corresponding drift given by $u_\lambda(x,t)=\lambda u(\lambda x,\lambda^2 t)=M[\theta_\lambda]$ for $\lambda>0$.

We let $(\theta,u)$ be a local smooth solution to \eqref{1.1}-\eqref{1.1-13} with smooth initial data defined on $[0,T]$ for some $T>0$. Under the assumption that $\theta_0\in L^3 $, we show the following two lemmas about some {\it a priori} estimates on $(\theta,u)$.

\begin{lemma}
For any $p\ge1$, there exists $C(p)>0$ such that for all $t\in(0,T)$,
\begin{align}\label{3.1}
\|\theta(\cdot,t)\|_{L^p}\le C(p) \|\theta_0\|_{L^p},
\end{align}
and
\begin{align}\label{3.2}
\|u(\cdot,t)\|_{W^{2,p}}\le C(p) \|\theta_0\|_{L^p}.
\end{align}
\end{lemma}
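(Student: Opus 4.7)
The plan is to establish the two estimates separately. The first is the standard $L^p$ conservation for a transport equation with divergence-free drift, and the second reduces to the two-orders-of-smoothing property of the Fourier multiplier $M$.

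For \eqref{3.1}, I would multiply the equation $\partial_t\theta + u\cdot\nabla\theta = 0$ by $|\theta|^{p-2}\theta$ and integrate over $\mathbb{T}^3$. Using $\nabla\cdot u = 0$ and integration by parts, the nonlinear term $\int |\theta|^{p-2}\theta\,(u\cdot\nabla\theta)\,dx = \frac{1}{p}\int u\cdot\nabla(|\theta|^p)\,dx = -\frac{1}{p}\int(\nabla\cdot u)|\theta|^p\,dx = 0$ vanishes, so $\frac{d}{dt}\|\theta\|_{L^p}^p = 0$ and in fact $\|\theta(\cdot,t)\|_{L^p} = \|\theta_0\|_{L^p}$; the case $p=\infty$ follows by passing to the limit or by the maximum principle for the transport equation, so \eqref{3.1} holds with $C(p)=1$ (or any absolute constant).

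For \eqref{3.2}, I would inspect the symbols \eqref{1.1-10}--\eqref{1.1-12} directly. Since $k\in\Z^3$ with $k_3\neq 0$ forces $|k_3|\ge 1$, the denominator satisfies $D(k) = N^4|k|^2k_3^2 + (\varepsilon_\nu|k|^4+k_2^2)^2 \ge c_{\varepsilon_\nu}(1+|k|^8)$ uniformly in $k$. Comparing with the numerators, whose monomials have total degree at most six, one obtains $|(ik)^\alpha \widehat{M}_j(k)| \le C_{\varepsilon_\nu,\alpha}(1+|k|)^{|\alpha|-2}$ for every multi-index $\alpha$ with $|\alpha|\le 2$, and analogous decay for derivatives of the symbol in $k$. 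The Mikhlin--H\"ormander multiplier theorem on $\mathbb{T}^3$ then gives $\|\partial^\alpha u_j\|_{L^p} \le C(p)\|\theta\|_{L^p}$ for all $|\alpha|\le 2$ and $1<p<\infty$, hence $\|u\|_{W^{2,p}} \le C(p)\|\theta\|_{L^p}$. Chaining with \eqref{3.1} yields \eqref{3.2}.

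The main obstacle is the verification of the Mikhlin-type bounds for the anisotropic symbols \eqref{1.1-10}--\eqref{1.1-13}. Because $D(k)$ is a sum of two non-negative homogeneous pieces of different degrees in the various components of $k$, one must check that the $\varepsilon_\nu^2|k|^8$ term really dominates uniformly when $|k_3|\ge 1$, so that the order $-2$ decay of $\widehat{M}_j$ is uniform in direction (the exclusion of the plane $k_3=0$ is crucial here, and is consistent with the zero-vertical-mean convention imposed after \eqref{1.1-13}). Once this uniform decay is in place, together with the analogous estimates on derivatives of the symbol, the rest is a standard invocation of the multiplier theorem and the transport identity.
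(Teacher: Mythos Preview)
Your proposal is correct and follows essentially the same route as the paper: the paper likewise derives \eqref{3.1} from the transport equation together with the divergence-free property of $u$, and obtains \eqref{3.2} by observing that $|\widehat{M}_j(k)|\le C_*|k|^{-2}$ on $\Z^3\setminus\{k_3=0\}$ and invoking the Fourier multiplier theorem, then chaining with \eqref{3.1}. Your version is simply more explicit about the multiplier verification than the paper, which only cites Stein.
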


\begin{proof}
The assertion \eqref{3.1} follows immediately from the first equation in \eqref{1.1} together with the divergence-free property on $u$. To prove \eqref{3.2}, we observe that, by the definition of the operator $M$ given in \eqref{1.1-10}-\eqref{1.1-12},  for all $k\in\mathbb{Z}^3\setminus\{k_3=0\}$ and all $j\in\{1,2,3\}$, we have
\begin{align*}
|\widehat M_j(k)|\le C_{*}|k|^{-2},
\end{align*}
where $C_{*}=C_{*}(N,\varepsilon_\nu)>0$ is a fixed constant. In other words, $M$ is a smoothing operator of degree 2. Hence with the help of the Fourier multiplier theorem (see Stein \cite{S70}), given $p\ge1$, there exists some constant $C(p)>0$ such that
\begin{align*}
\|u(\cdot,t)\|_{W^{2,p}}\le C(p) \|\theta(\cdot,t)\|_{L^p}.
\end{align*}
Therefore \eqref{3.2} follows from \eqref{3.1}.
\end{proof}

\begin{lemma}
There exists a constant $C>0$ such that, for any $t\in(0,T)$, we have
\begin{align}\label{3.3}
\|u(\cdot,t)\|_{L.L.}\le C \|\theta_0\|_{L^3},
\end{align}
and
\begin{align}\label{3.4}
\|u(\cdot,t)\|_{L^\infty}\le C \|\theta_0\|_{L^3}.
\end{align}

\end{lemma}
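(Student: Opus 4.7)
The plan is to derive both inequalities as direct consequences of the previous lemma, using the embedding facts already collected in the Preliminaries.

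First I would handle the $L^\infty$ bound. By the embedding \eqref{2.3}, we have $\|u(\cdot,t)\|_{L^\infty}\le C\|u(\cdot,t)\|_{W^{2,3}}$. Then applying Lemma~4.1 with $p=3$ gives $\|u(\cdot,t)\|_{W^{2,3}}\le C\|\theta_0\|_{L^3}$, which yields \eqref{3.4} after combining the two estimates.

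Next I would treat the Log-Lipschitz bound by a similar chain. The idea is to pass from $L.L.$ to $BMO$ on $\nabla u$ via \eqref{2.2}, then from $BMO$ to $W^{1,3}$ via \eqref{2.1} applied to $\nabla u$, which absorbs into $\|u\|_{W^{2,3}}$. Explicitly,
\begin{align*}
\|u(\cdot,t)\|_{L.L.}\le C\|\nabla u(\cdot,t)\|_{BMO}\le C\|\nabla u(\cdot,t)\|_{W^{1,3}}\le C\|u(\cdot,t)\|_{W^{2,3}},
\end{align*}
and another application of Lemma~4.1 with $p=3$ closes the estimate to obtain \eqref{3.3}.

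There is no substantive obstacle here: the whole content sits in the two-orders-of-smoothing property of $M$ already exploited in Lemma~4.1, together with the standard embeddings recalled in Section~3. The only thing to be mildly careful about is that the constant $C$ is allowed to depend on $N$ and $\varepsilon_\nu$ (via the constant $C_*$ from the Fourier multiplier bound on $\widehat M$), but this dependence is harmless since these parameters are fixed throughout the analysis of the viscous MG system.
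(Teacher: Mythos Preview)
Your proof is correct and follows essentially the same route as the paper: the chain $\|u\|_{L.L.}\le C\|\nabla u\|_{BMO}\le C\|\nabla u\|_{W^{1,3}}\le C\|u\|_{W^{2,3}}$ via \eqref{2.2}--\eqref{2.1}, the embedding \eqref{2.3} for the $L^\infty$ bound, and then \eqref{3.2} from Lemma~4.1 to close. The only cosmetic difference is that you treat \eqref{3.4} first, whereas the paper does \eqref{3.3} first and then remarks that \eqref{3.4} follows similarly.
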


\begin{proof}
Using \eqref{2.1} and \eqref{2.2}, we can choose some $C>0$ such that
\begin{align*}
\|u(\cdot,t)\|_{L.L.}&\le C \|\nabla u(\cdot,t)\|_{BMO}\\
&\le C \|\nabla u(\cdot,t)\|_{W^{1,3}}\\
&\le C \|u(\cdot,t)\|_{W^{2,3}}.
\end{align*}
On the other hand, with the help of \eqref{3.2} from Lemma~4.1, we can bound $\|u(\cdot,t)\|_{W^{2,3}}$ in terms of $\|\theta_0\|_{L^3}$. Hence \eqref{3.3} follows. \eqref{3.4} can proved similarly by using \eqref{2.3} and \eqref{3.2} and we omit the details.
\end{proof}
\begin{remark} In view of Lemma~4.2, one can obtain bounds on both $\|u(\cdot,t)\|_{L.L. }$ and $\|u(\cdot,t)\|_{L^\infty }$ in terms of $\|\theta_0\|_{L^3 }$ without any further assumption on $\|\theta_0\|_{L^\infty }$. A uniform-in-time bound on the Log-Lipschitzian norm of $u$ is essential to assure the existence and uniqueness of the flow map $\psi(x,t)$ (which will be given in the proof of Theorem~2.1, see below) and hence the existence and uniqueness of the solution. In the case for 2D Euler equation, initial conditions on the vorticity $\omega_0$ of the type $\omega_0\in L^\infty$ (or $\theta_0\in BMO, B^0_{\infty,\infty}$) are required in getting a uniform-in-time bound of $\|u(\cdot,t)\|_{L.L. }$. By utilizing the 2-order smoothing effect of $M$, we obtain enough regularity on $u$ which gives the desired bound on $\|u(\cdot,t)\|_{L.L. }$ in terms of $\|\theta_0\|_{L^3 }$.
\end{remark}
We are ready to give the proof of Theorem~2.1. The main idea is to apply Lemma~4.2 which gives uniform bounds on $\|u(\cdot,t)\|_{L.L. }$ and $\|u(\cdot,t)\|_{L^\infty }$ in terms of $\|\theta_0\|_{L^3 }$ only. Once these bounds are established, the existence and uniqueness follow from the similar argument given by Bernicot-Keraani \cite{BK12} for 2D Euler equation.
\begin{proof}[{\bf Proof of Theorem~2.1}]
The proof is divided into two parts.

\noindent{\bf Existence:} Consider the standard mollifier $\rho\in C^\infty_0 $, and we set $\theta_0^{(n)}=\rho_n *\theta_0$ for $n\in\N$ and $\rho_n(x)=n^3\rho(nx)$. For the rest of this section, $C>0$ denotes a generic constant which is independent of $n$ unless otherwise stated.

By standard argument, we can obtain a sequence of global smooth solution $(\theta^{(n)},u^{(n)})$ to \eqref{1.1} with $u^{(n)}=u(\theta^{(n)})$. Define $\psi_n(x,t)$ to be the flow map given by
\begin{align*}
\partial_t\psi_n(x,t)=u^{(n)}(t,\psi_n(x,t)).
\end{align*}
One can show (for example in \cite{BK12}) that
\begin{align}\label{3.5}
\|\psi_n(t,\cdot)\|_{*}\le C \exp\left(\int_0^t\|u^{(n)}(\cdot,\tau)\|_{L.L.}d\tau\right),
\end{align}
where the norm $\|\cdot\|_{*}$ is given by
\begin{align*}
\|\psi\|_{*}=\sup_{x\neq y}\Phi(|\psi(x)-\psi(y)|,|x-y|)
\end{align*}
with
\begin{align*}
\Phi(r,s)=\left\{ \begin{array}{l}
\mbox{$\max\{\frac{1+|\log(s)|}{1+|\log(r)|},\frac{1+|\log(r)|}{1+|\log(s)|}\}$, if $(1-s)(1-r)\ge0$,}\\
\mbox{$(1+|\log(s)|)(1+|\log(r)|)$, if $(1-s)(1-r)\le0$.}
\end{array}\right.
\end{align*}
So using \eqref{3.3} and \eqref{3.5}, we have
\begin{align}\label{3.6}
|\psi_n(t,x_2)-\psi_n(t,x_2)|\le c(t)|x_x-x_1|^{\beta(t)}
\end{align}
for all $(x_1,x_2)\in\R^3\times\R$, where $c(t),\beta(t)$ are some continuous functions which depends on $\|u_0\|_{L^3 }$.
And also, for $t_1,t_2\ge0$, using \eqref{3.4},
\begin{align}\label{3.7}
|\psi_n(x,t_1)-\psi_n(x,t_2)|&\le C |t_2-t_1|(\|u(\cdot,t_1)\|_{L^\infty}+\|u(\cdot,t_2)\|_{L^\infty})\notag\\
&\le C |t_2-t_1|\|u_0\|_{L^3}.
\end{align}
In view of \eqref{3.6}-\eqref{3.7}, the family $\{\psi_n\}_{n\in\N}$ is bounded and equicontinuous on every compact set in $\R^+\times\R^3$. Arzela-Ascoli theorem then impies the existence of a limiting trajectory $\psi(x,t)$. By similar analysis on $\{\psi^-_n\}_{n\in\N}$ (where $\psi^-_n$ is the inverse flow map of $\psi_n$), we can obtain a limit $\phi(x,t)$ to $\psi^-_n$ and that $\phi\circ\psi=\psi\circ\phi=id$. So $\psi(x,t)$ is a Lebesgue measure preserving homeomorphism.

We define $\theta(x,t)=\theta_0(\psi^{-}_t (x))$ and $u=M[\theta]$. It follows that
\begin{align*}
\mbox{$\theta^{(n)}(\cdot,t)\rightarrow\theta(\cdot,t)$ in $L^3 $},
\end{align*}
which implies $u^{(n)}(\cdot,t)\rightarrow u(\cdot,t)$ uniformly, using the fact that
\begin{align*}
\|u^{(n)}(\cdot,t)-u(t)\|_{L^\infty}\le C \|\theta^{(n)}(\cdot,t)-\theta(\cdot,t)\|_{L^3}.
\end{align*}
So the above allows us to pass the limit in the integral equation on $\theta^{(n)}$ and prove that $(\theta,u)$ is a weak solution to \eqref{1.1}. The continuity of $\psi$ and the preservation of Lebesgue measure imply that $t\mapsto\theta(t)$ is continuous with values in $L^3 $, in particular implies $u\in C((0,\infty);W^{2,3} )$. Finally, the assertion that $\theta(\cdot,t)\rightarrow\theta_0$ weakly in $L^3$ as $t\rightarrow0^+$ follows by a similar argument as given by DiPerna-Lions \cite{DL89} and Kato-Ponce \cite{KP86}.

 \noindent{\bf Uniqueness:} 
 We only give a sketch of the proof. Let $T>0$ and suppose that $(\theta^1,u^1)$ and $(\theta^2,u^2)$ solve \eqref{1.1}-\eqref{1.1-13} with $\theta^1(\cdot,0)=\theta^2(\cdot,0)=\theta_0$. Following the similar argument given in \cite{BCD11}, there exists a constant $\mathcal{C}>0$ such that for all $\delta\in(0,1)$ and $k\in\{-1\}\cup\N$, we have
\begin{align*}
\|\Delta_k (\theta^1-\theta^2)\|_{L^\infty}\le \mathcal{C}(k+1)2^{k\delta}(\|u^1\|_{\overline{L.L.}}+\|u^2\|_{\overline{L.L.}})\|(\theta^1-\theta^2)\|_{B^{-\delta}_{\infty,\infty}},
\end{align*}
where $\Delta_k$'s are the usual nonhomogeneous dyadic blocks and $\|\cdot\|_{\overline{L.L.}}=\|\cdot\|_{L^\infty}+\|\cdot\|_{L.L.}$. Define
\begin{align*}
\hat{T}=\sup\left\{t\in[0,T]:\mathcal{C}\int_0^t(\|u^1\|_{\overline{L.L.}}+\|u^2\|_{\overline{L.L.}})(\tau)d\tau\le\frac{1}{2}\right\},
\end{align*}
then by \eqref{3.3}-\eqref{3.4}, $\hat{T}$ is well-defined. For each $t\in[0,\hat{T}]$, we let $\displaystyle\delta_t=\mathcal{C}\int_0^t(\|u^1\|_{\overline{L.L.}}+\|u^2\|_{\overline{L.L.}})(\tau)d\tau$. Using Theorem~3.28 in \cite{BCD11}, we have for all $k\ge-1$,
\begin{align*}
2^{-k\delta_t}\|\Delta_k (\theta^1-\theta^2)(t)\|_{L^\infty}\le \frac{1}{2}\sup_{t'\in[0,t]}\|(\theta^1-\theta^2)\|_{B^{-\delta_t}_{\infty,\infty}}.
\end{align*}
Summing over $k$, we conclude that $\theta^1=\theta^2$ on $[0,\hat{T}]$. By repeating the argument a finite number of times we obtain the uniqueness on the whole interval $[0,T]$. We finish the proof of Theorem~2.1.
 \end{proof}

Theorem~2.2 can then be proved by a similar argument as that of Theorem~2.1 for $\theta_0\in W^{s,3} $ with $s>0$. By controlling the term $\|\nabla u(\theta)(\cdot,t)\|_{L^\infty }$, we further obtain the single exponential growth in time on $\|\nabla\theta(\cdot,t)\|_{L^3 }$ when $\theta_0\in W^{1,3} $. The details are given as follows.
\begin{proof}[{\bf Proof of Theorem~2.2}]
We only need {\it a priori} bounds on $\theta$. For the rest of the proof, $C>0$ denotes a generic constant which is independent of time. Given $s>0$, we apply Fourier transform on the first equation of \eqref{1.1}, multiply it by $\langle\xi\rangle^s$, rearrange the terms and take the inverse Fourier transform to obtain
\begin{align*}
\|\theta(\cdot,t)\|_{W^{s,3}}\le C\left[ \|\theta_0\|_{W^{s,3}}+\int_0^t\|\nabla u(\cdot,\tau)\|_{L^\infty}\|\theta(\cdot,\tau)\|_{W^{s,3}}d\tau\right],
\end{align*}
which implies
\begin{align}\label{3.8}
\|\theta(\cdot,t)\|_{W^{s,3}}\le C \|\theta_0\|_{W^{s,3}}\exp\left(\int_0^t C\|\nabla u(\cdot,\tau)\|_{L^\infty}d\tau\right).
\end{align}
It suffices to estimate $\|\nabla u(\cdot,t)\|_{L^\infty }$. We divide it into two cases.

\noindent{\bf Case 1:} $0<s<1$.
Define $p=\frac{3}{1-s}$. Then $p>3$ and we have the following embedding (see for example Nezzaa-Palatuccia-Valdinocia \cite{NPV11}) that $W^{s,3} \hookrightarrow L^p$ and hence
\begin{align}
\|\theta_0\|_{L^p}\le C \|\theta_0\|_{W^{s,3}}.\label{3.9}
\end{align}
On the other hand, by the similar argument as given in the proof of Lemma~3.2, we get
\begin{align}
\|u(\cdot,t)\|_{W^{2,p}}\le C \|\theta(\cdot,t)\|_{L^p}.\label{3.10}
\end{align}
Therefore, using \eqref{2.6}, \eqref{3.1}, \eqref{3.9} and \eqref{3.10}, we conclude
\begin{align*}
\|\nabla u(\cdot,t)\|_{L^\infty}\le C \|\theta_0\|_{W^{s,3}}.
\end{align*}
\noindent{\bf Case 2:} $s\ge1$.
Using the embeddings $W^{\frac{1}{2},3} \hookrightarrow L^6 $ and $W^{s,3} \hookrightarrow W^{\frac{1}{2},3} $, we follow the similar argument as given in Case 1 to get
\begin{align*}
\|\nabla u(\cdot,t)\|_{L^\infty}&\le C \|\theta(\cdot,t)\|_{L^6}\\
&\le C \|\theta_0\|_{L^6}\\
&\le C \|\theta_0\|_{W^{\frac{1}{2},3}}\le C \|\theta_0\|_{W^{s,3}}.
\end{align*}
We substitute the above estimates on $\|\nabla u(\cdot,t)\|_{L^\infty}$ into \eqref{3.8} and obtain the {\it a priori} required bounds on $\theta$.


Finally, we prove the single exponential growth in time on $\|\nabla\theta(\cdot,t)\|_{L^3 }$ under the assumption that $\theta_0\in W^{1,3} $. We consider the sequence of global smooth solution $(\theta^{(n)},u^{(n)})$ to \eqref{1.1} as given in the proof of Theorem~2.1 with mollified initial data $\theta_0^{(n)}\in C^\infty$. We then differentiate the first equation of \eqref{1.1} with respect to $x$, integrate over space-time and use Gronwall's inequality to get
\begin{align}
\|\nabla\theta^{(n)}(\cdot,t)\|_{L^3}\le \mathcal{C}_{1} \|\nabla\theta^{(n)}_0\|_{L^3}\exp\left(\int_0^t \mathcal{C}_{2}\|\nabla u^{(n)} (\cdot,\tau)\|_{L^\infty}d\tau\right),\label{3.11}
\end{align}
where $\mathcal{C}_{1},\mathcal{C}_{2}>0$ are fixed constant which are independent of $\theta_0,n,t$. On the other hand, we have another fixed constant $\mathcal{C}_3>0$ which depends only on the spatial dimension and is independent of $\theta_0,n,t$ such that
\begin{align*}
\mbox{$\|\nabla u^{(n)}(\cdot,t)\|_{L^\infty}\le \mathcal{C}_3 \|\theta^{(n)}_0\|_{W^{1,3}}\le \mathcal{C}_3 \|\theta_0\|_{W^{1,3}}$ for all $t\ge0$ and $n\in\N$.}
\end{align*}
Hence we conclude from \eqref{3.11} that
\begin{align*}
\|\nabla\theta^{(n)}(\cdot,t)\|_{L^3}\le \mathcal{C}_{1} \|\nabla\theta^{(n)}_0\|_{L^3}\exp\left(\int_0^t\,\mathcal{C}_{2} \mathcal{C}_{3}\|\theta_0\|_{W^{1,3}}d\tau\right).
\end{align*}
By taking $n\rightarrow\infty$, \eqref{1.5} follows immediately. We finish the proof of Theorem~2.2.
\end{proof}

\begin{remark} Our result for the single exponential growth in time on $\|\nabla\theta(\cdot,t)\|_{L^3 }$ is better than the well-known double exponential growth in time of the vorticity gradient for the 2D Euler equation (see, for example, Majda-Bertozzi \cite{MB01}, Denisov \cite{D14} for more discussion).
\end{remark}

\begin{remark} We further consider the non-diffusive system \eqref{1.1}-\eqref{1.1-13} with ``damping'', namely
\begin{align}
\label{1.1"}\left\{ \begin{array}{l}
\partial_t\theta+u\cdot\nabla\theta=-c\theta, \\
u=M[\theta],\theta(x,0)=\theta_0(x)
\end{array}\right.
\end{align}
where $c>0$ is the {\it damping constant} and $M$ is the operator as defined in \eqref{1.1-10}-\eqref{1.1-13}. Using a similar argument to the one given in the proof of Theorem~2.2, we have the following estimate on $\nabla \theta$ provided that $\theta_0\in W^{1,3} $:
\begin{align*}
\mbox{$\|\nabla\theta(\cdot,t)\|_{L^3}\le \mathcal{C}_1 \|\nabla\theta_0\|_{L^3}\exp\left[t\left(\mathcal{C}_2 \|\theta_0\|_{W^{1,3}}-c\right)\right]$ for all $t>0$,}
\end{align*}
where $\mathcal{C}_1,\mathcal{C}_2$ are the constants defined in the proof of Theorem~2.2 which depend only on the spatial dimension. Hence the solution $\theta$ to \eqref{1.1"} remains {\it bounded} in $L^{\infty}((0,\infty);W^{1,3} )$ provided that $c>\mathcal{C}_2 \|\theta_0\|_{W^{1,3}}$.
\end{remark}

\section{Thermally diffusive case when $\varepsilon_\kappa>0$}

Next we study the thermally diffusive case when $\varepsilon_\kappa>0$ in \eqref{1.1}-\eqref{1.1-13}. We also mention a recent work obtained by Ferreira-Lima \cite{FL14-1} which proved a global well-posedness result for a family of dissipative active scalar equations (via a different method) with a smallness condition on the weak norm of a Fourier-Besov-Morrey space that allowed to consider some types of large initial data in $L^p$ and Sobolev spaces.

We first state the following lemmas for which the proofs can be found in Carrillo-Ferreira \cite{CF08} and Lewis \cite{L72}.

\begin{lemma}
Let $G(t)$ be the convolution operator with kernel given in Fourier variables by $\widehat g(\xi,t)=e^{-\varepsilon_\kappa t|\xi|^2}$. Then for $s_1\le s_2$, $s_1,s_2\in\R$ and $1\le p_1\le p_2<\infty$, there is a constant $C>0$ such that
\begin{align}\label{4.1}
\|G(t)f\|_{\dot{W}^{s_2,p_2}}\le Ct^{-\frac{(s_2-s_1)}{2}-\frac{3}{2}(\frac{1}{p_1}-\frac{1}{p_2})}\|f\|_{\dot{W}^{s_1,p_1}}
\end{align}
for all $f\in \dot W^{s_1,p_1}$.
\end{lemma}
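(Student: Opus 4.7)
The plan is to establish this estimate by the standard two-step decomposition of the heat semigroup: use the semigroup property $G(t) = G(t/2)\circ G(t/2)$ and handle regularity gain and integrability gain separately. More precisely, I would write
\begin{equation*}
\|G(t)f\|_{\dot{W}^{s_2,p_2}} \le \bigl\|G(t/2)\bigr\|_{\dot{W}^{s_2,p_1}\to \dot{W}^{s_2,p_2}}\; \bigl\|G(t/2)\bigr\|_{\dot{W}^{s_1,p_1}\to \dot{W}^{s_2,p_1}}\;\|f\|_{\dot{W}^{s_1,p_1}},
\end{equation*}
and aim for $t^{-(s_2-s_1)/2}$ on the first operator norm (the \textbf{smoothing} factor) and $t^{-\frac{3}{2}(1/p_1-1/p_2)}$ on the second (the \textbf{decay} factor).

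First I would handle the integrability gain $p_1\to p_2$ (at fixed regularity level $s_2$). Since differentiation commutes with convolution, this reduces to showing $\|G(t)h\|_{L^{p_2}}\le C t^{-\frac{3}{2}(1/p_1-1/p_2)}\|h\|_{L^{p_1}}$. By Young's convolution inequality with exponent $r$ satisfying $1/r = 1+1/p_2 - 1/p_1$, it suffices to know that the heat kernel $g(\cdot,t)$ (whose Fourier symbol is $e^{-\varepsilon_\kappa t|\xi|^2}$) obeys $\|g(\cdot,t)\|_{L^r} \le C t^{-\frac{3}{2}(1-1/r)}$. On $\mathbb{R}^3$ this is immediate from the explicit Gaussian formula and rescaling $y=\sqrt{t}x$; on $\mathbb{T}^3$ the same bound holds because the periodic heat kernel is a sum of Gaussians plus a rapidly-decaying-in-$t$ remainder (and we will only apply this to mean-zero functions, so the constant mode is harmless).

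Next I would handle the regularity gain $s_1 \to s_2$ at fixed integrability $p_1$. This is a Fourier-multiplier statement: for $f$ of mean zero,
\begin{equation*}
\|G(t)f\|_{\dot{W}^{s_2,p_1}} = \bigl\|\,\bigl(|\xi|^{s_2-s_1}e^{-\varepsilon_\kappa t|\xi|^2}\bigr)\,\widehat{|\nabla|^{s_1}f}\,\bigr\|_{L^{p_1}}^\vee,
\end{equation*}
and the symbol $m_t(\xi):=|\xi|^{s_2-s_1}e^{-\varepsilon_\kappa t|\xi|^2}$ satisfies $m_t(\xi)=t^{-(s_2-s_1)/2}m_1(\sqrt{t}\,\xi)$. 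Since $m_1$ is a smooth, rapidly decreasing function away from the origin (with a mild singularity of order $s_2-s_1\ge 0$ at zero that is controlled by the Gaussian factor), a standard Mikhlin/Marcinkiewicz multiplier check yields the bound on $L^{p_1}(\mathbb{T}^3)$ with the scale factor $t^{-(s_2-s_1)/2}$.

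Combining the two ingredients by the semigroup decomposition above, and absorbing the factor of $1/2$ into the constant $C$, produces the claimed estimate. The \textbf{main obstacle} I anticipate is making the Fourier multiplier step rigorous on the torus when $s_2-s_1$ is not an integer: one cannot directly invoke the Euclidean Mikhlin theorem on the periodic symbol. The cleanest workaround is to transfer to $\mathbb{R}^3$ via de Leeuw's theorem (using the restriction of the multiplier to $\mathbb{Z}^3$), or equivalently to use a Littlewood--Paley decomposition on $\mathbb{T}^3$ adapted to dyadic frequencies and to dominate the truncated symbol on each annulus by its supremum, exploiting that the Gaussian factor decays super-polynomially at scale $|\xi|\gtrsim t^{-1/2}$; both routes are standard and are carried out explicitly in the references Carrillo--Ferreira \cite{CF08} and Lewis \cite{L72} cited by the authors.
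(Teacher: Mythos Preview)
The paper does not actually prove this lemma: it simply states it and refers the reader to Carrillo--Ferreira \cite{CF08} and Lewis \cite{L72}. Your sketch is the standard argument and is essentially what is found in those references --- Young's inequality on the heat kernel for the $L^{p_1}\to L^{p_2}$ step, a Mikhlin-type multiplier bound for the $\dot W^{s_1}\to\dot W^{s_2}$ step, and the semigroup property to combine them.

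One small slip worth correcting: in your displayed inequality the labels on the two operator norms are swapped. The factor $\|G(t/2)\|_{\dot W^{s_2,p_1}\to\dot W^{s_2,p_2}}$ is the \emph{integrability-gain} piece and carries the exponent $-\tfrac{3}{2}(1/p_1-1/p_2)$, while $\|G(t/2)\|_{\dot W^{s_1,p_1}\to\dot W^{s_2,p_1}}$ is the \emph{smoothing} piece and carries $-(s_2-s_1)/2$. This is purely a narrative mix-up and does not affect the argument. Note also that if you want to cover the endpoint $p_1=1$ allowed by the statement, you should avoid invoking Mikhlin at $p_1$; the cleanest fix is to do the integrability gain first (so the multiplier estimate is applied at $p_2>1$), or to bound the $L^1$ norm of the kernel of $|\nabla|^{s_2-s_1}G(t)$ directly via scaling.
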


\begin{lemma}
Let $\mathbb{X}$ be a Banach space with norm $\|\cdot\|_{\mathbb{X}}$ and $B:\mathbb{X}\times \mathbb{X}\rightarrow \mathbb{X}$ be a continuous bilinear map, that means there exists $K>0$ such that
\begin{align*}
\|B(x_1,x_2)\|_{\mathbb{X}}\le K\|x_1\|_{\mathbb{X}}\|x_2\|_{\mathbb{X}}
\end{align*}
for all $x_1,x_2\in \mathbb{X}$. Given $0<\delta<\frac{1}{4K}$ and $y\in \mathbb{X}$ such that $\|y\|_{\mathbb{X}}\le\delta$, there exists a solution $x\in \mathbb{X}$ for the equation
\begin{align*}
x=y+B(x,x)
\end{align*}
such that $\|x\|_{\mathbb{X}}\le2\delta$. The solution $x$ is unique in the closed ball $\{x\in \mathbb{X}:\|x\|_{\mathbb{X}}\le2\delta\}$. Moreover, the solution depends continuously on $y$ in the following sense: if $\|\tilde{y}\|_{\mathbb{X}}\le\delta$, $\tilde{x}=\tilde{y}+B(\tilde{x},\tilde{x})$ and $\|\tilde{x}\|_{\mathbb{X}}\le2\delta$, then
\begin{align*}
\|x-\tilde{x}\|_{\mathbb{X}}\le\frac{\|y-\tilde{y}\|_{\mathbb{X}}}{1-4K\delta}.
\end{align*}
\end{lemma}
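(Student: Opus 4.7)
The plan is to apply a standard contraction mapping argument to the self-map $T:\mathbb{X}\to \mathbb{X}$ defined by $T(x)=y+B(x,x)$, restricted to the closed ball $\mathbb{B}_{2\delta}:=\{x\in\mathbb{X}:\|x\|_{\mathbb{X}}\le 2\delta\}$. Existence and uniqueness of the fixed point then follow from Banach's fixed point theorem, and the Lipschitz-in-$y$ estimate will fall out of a direct subtraction of the two fixed-point equations.

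First I would verify that $T$ maps $\mathbb{B}_{2\delta}$ into itself: for $x\in\mathbb{B}_{2\delta}$, the bilinear bound yields
\[
\|T(x)\|_{\mathbb{X}}\le \|y\|_{\mathbb{X}}+K\|x\|_{\mathbb{X}}^2 \le \delta+4K\delta^2 = \delta(1+4K\delta)\le 2\delta,
\]
where the last inequality uses the assumption $4K\delta<1$. Next I would establish the contraction property. Using the bilinearity identity
\[
B(x_1,x_1)-B(x_2,x_2)=B(x_1-x_2,x_1)+B(x_2,x_1-x_2),
\]
which is the one genuinely substantive algebraic step, I obtain for $x_1,x_2\in \mathbb{B}_{2\delta}$
\[
\|T(x_1)-T(x_2)\|_{\mathbb{X}}\le K(\|x_1\|_{\mathbb{X}}+\|x_2\|_{\mathbb{X}})\|x_1-x_2\|_{\mathbb{X}}\le 4K\delta\,\|x_1-x_2\|_{\mathbb{X}},
\]
with contraction constant $4K\delta<1$. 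Banach's fixed point theorem then produces a unique $x\in\mathbb{B}_{2\delta}$ with $x=T(x)$, which is exactly the claimed solution, and uniqueness within $\mathbb{B}_{2\delta}$ is automatic.

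For the continuous dependence statement, I would subtract the equations $x=y+B(x,x)$ and $\tilde{x}=\tilde{y}+B(\tilde{x},\tilde{x})$ and apply the same bilinearity identity to get
\[
x-\tilde{x}=(y-\tilde{y})+B(x-\tilde{x},x)+B(\tilde{x},x-\tilde{x}).
\]
Taking $\|\cdot\|_{\mathbb{X}}$, using $\|x\|_{\mathbb{X}},\|\tilde{x}\|_{\mathbb{X}}\le 2\delta$, and rearranging gives
\[
(1-4K\delta)\|x-\tilde{x}\|_{\mathbb{X}}\le \|y-\tilde{y}\|_{\mathbb{X}},
\]
which is the claim, since $1-4K\delta>0$.

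This is a textbook abstract lemma, so there is no serious obstacle; the only place requiring care is the bilinearity trick that turns $B(x,x)-B(\tilde{x},\tilde{x})$ into an expression linear in the difference $x-\tilde{x}$, and checking that the constraint $\delta<1/(4K)$ is precisely what makes both the self-mapping and the contraction estimates close.
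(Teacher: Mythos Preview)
Your argument is correct and is the standard contraction-mapping proof of this abstract fixed-point lemma. The paper itself does not supply a proof but merely cites Carrillo--Ferreira \cite{CF08} and Lewis \cite{L72}, so your proposal in fact fills in what the authors omitted.
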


We will prove the following theorem about the local-in-time existence of solutions to \eqref{1.1}-\eqref{1.1-13} with initial data $\theta_0\in L^3 $ when $\varepsilon_\kappa>0$. It is crucial in proving Theorem~2.3. The argument is similar to the one given by Ferreira-Lima \cite{FL14}.

\begin{theorem}
Let $\theta_0\in L^3 $. For any $\varepsilon_\kappa>0$, there exists $T>0$ such that \eqref{1.1}-\eqref{1.1-13} has a unique mild solution $\theta^{\varepsilon_\kappa}$ in the class
\begin{align}
\theta^{\varepsilon_\kappa}&\in C((0,T);L^3 ),\label{4.1-1}\\
t^{\frac{s}{2}+\frac{1}{2}-\frac{3}{2p}}\theta^{\varepsilon_\kappa}&\in C((0,T);\dot{W}^{s,p} )\label{4.1-2},
\end{align}
for all $s,p$ satisfying $s\in[0,1)$ and $p\in(3,\infty)$.
\end{theorem}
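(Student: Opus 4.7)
My plan is to cast \eqref{1.1}--\eqref{1.1-13} as a fixed-point problem for a bilinear operator and apply Lemma~5.2 in a Kato-type space, following the Ferreira--Lima strategy cited in the text. Since $u = M[\theta]$ is divergence-free, I would work with the mild (Duhamel) formulation
\begin{equation*}
\theta(t) = G(t)\theta_0 + B(\theta,\theta)(t),\qquad B(\theta_1,\theta_2)(t) := -\int_0^t G(t-\tau)\,\nabla\cdot\bigl(M[\theta_1]\,\theta_2\bigr)(\tau)\,d\tau,
\end{equation*}
where $G(t)$ is the heat semigroup of Lemma~5.1. Fixing an auxiliary exponent $p_0\in(3,\infty)$, I would set up the Banach space
\begin{equation*}
\mathbb{X}_T := \Bigl\{\theta:\ \|\theta\|_{\mathbb{X}_T} := \sup_{0<t<T} t^{\frac{1}{2}-\frac{3}{2p_0}}\|\theta(t)\|_{L^{p_0}} < \infty\Bigr\}.
\end{equation*}

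Lemma~5.1 applied with $(s_1,s_2,p_1,p_2) = (0,0,3,p_0)$ gives the uniform bound $\|G(\cdot)\theta_0\|_{\mathbb{X}_T} \le C\|\theta_0\|_{L^3}$, and the crucial refinement needed for the fixed point is that $\|G(\cdot)\theta_0\|_{\mathbb{X}_T}\to 0$ as $T\to 0^+$; this I would first verify for $\theta_0\in C^\infty$ (where the positive weight $\frac{1}{2}-\frac{3}{2p_0}>0$ trivially forces vanishing) and then pass to general $\theta_0\in L^3$ by density, absorbing the error through the uniform estimate. For the bilinear term, the 2-order smoothing of $M$ encoded in \eqref{1.1-10}--\eqref{1.1-12}, as exploited in Lemma~4.1, in particular gives $\|M[f]\|_{L^{p_0}}\le C\|f\|_{L^{p_0}}$, so by H\"older
\begin{equation*}
\|M[\theta_1](\tau)\,\theta_2(\tau)\|_{L^{p_0/2}} \le C\,\tau^{-2(\frac{1}{2}-\frac{3}{2p_0})}\,\|\theta_1\|_{\mathbb{X}_T}\|\theta_2\|_{\mathbb{X}_T}.
\end{equation*}
Applying Lemma~5.1 a second time with $(s_1,s_2,p_1,p_2) = (0,1,p_0/2,p_0)$ to absorb the $\nabla\cdot$, followed by the Beta-function calculation, produces $\|B(\theta_1,\theta_2)\|_{\mathbb{X}_T} \le K\|\theta_1\|_{\mathbb{X}_T}\|\theta_2\|_{\mathbb{X}_T}$ with $K$ independent of $T$; the Beta integral converges precisely because $p_0>3$. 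Choosing $T$ so small that $\|G(\cdot)\theta_0\|_{\mathbb{X}_T} < 1/(4K)$, Lemma~5.2 furnishes a unique mild solution in a small ball of $\mathbb{X}_T$.

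It then remains to promote the regularity to the full range in \eqref{4.1-1}--\eqref{4.1-2}. For \eqref{4.1-1}, inserting $\theta^{\varepsilon_\kappa}$ back into the integral equation and estimating $B(\theta^{\varepsilon_\kappa},\theta^{\varepsilon_\kappa})(t)$ directly in $L^3$ via Lemma~5.1 (now with $p_2=3$) yields continuity into $L^3$ on $(0,T)$, while the strong continuity of $G$ at $t=0$ handles the initial datum. For \eqref{4.1-2}, the same bilinear scheme redone with $(s_1,s_2) = (0,s+1)$ and the corresponding H\"older product step produces the weighted $\dot{W}^{s,p}$ bound. The main technical hurdle I anticipate is arranging the H\"older product and smoothing exponents so that the Beta integrals converge with the correct homogeneity across the entire range $s\in[0,1)$, $p\in(3,\infty)$: the endpoints $s=1$ and $p=3$ each make one of the singularities (at $\tau=t$ or $\tau=0$) borderline non-integrable, so the intermediate product exponent must be chosen carefully, and possibly varied with $(s,p)$, to keep each estimate strictly inside the feasible region.
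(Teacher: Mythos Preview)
Your overall strategy---Kato fixed point in the weighted space $\mathbb{X}_T$ via Lemma~5.2, with the density argument for $\|G(\cdot)\theta_0\|_{\mathbb{X}_T}\to 0$---is exactly the paper's. The one substantive difference is how you bound the product $M[\theta_1]\theta_2$ inside the bilinear term. You use only the $L^{p_0}$-boundedness of $M$ and H\"older into $L^{p_0/2}$, then restore the Lebesgue exponent through the heat kernel; the paper instead exploits the full two-order smoothing of $M$, writing
\[
\|M[\theta]\|_{L^\infty}\le C\|M[\theta]\|_{W^{2,p}}\le C\|\theta\|_{L^p},
\]
so that $\|M[\theta_1]\,\theta_2\|_{L^p}\le C\|\theta_1\|_{L^p}\|\theta_2\|_{L^p}$ with no change of exponent. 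Both routes yield the bilinear estimate and the fixed point, but the paper's choice pays off in the bootstrap to \eqref{4.1-2}: with $u\in L^\infty$ the relevant Beta integral is $\int_0^1 (1-z)^{-(s+1)/2}z^{-1+3/p}\,dz$, convergent for the entire range $s\in[0,1)$, $p\in(3,\infty)$ with no coupling between $s$ and $p$. Your scheme, by contrast, picks up an extra $3/p_0-3/(2p)$ in the singularity at $\tau=t$, which is exactly the ``technical hurdle'' you anticipate; it can be handled by varying the intermediate exponent, but the $L^\infty$ bound on $u$ coming from the two-order smoothing is the cleaner device here and is specific to the viscous MG operator. For the $L^3$ part \eqref{4.1-1} the paper proceeds by proving uniform $L^\infty_t L^3_x$ bounds on the Picard iterates (again using $\|u\|_{L^\infty}\lesssim\|\theta\|_{L^p}$) and passing to the limit by weak-$*$ compactness, rather than by direct re-insertion as you propose; either works.
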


\begin{proof}[{\bf proof of Theorem~5.3}]
We convert the system \eqref{1.1}-\eqref{1.1-13} into the integral equation:
\begin{align}\label{4.2}
\theta^{\varepsilon_\kappa}(x,t)=G(t)\theta_0(x)+B(\theta^{\varepsilon_\kappa}(x,t),\theta^{\varepsilon_\kappa}(x,t)),
\end{align}
where $G(t)$ is the convolution operator as defined in Lemma~5.1 and $B(\cdot,\cdot)$ is the bilinear form
\begin{align*}
B(\phi(x,t),\psi(x,t))=-\int_0^t G(t-\tau)\left[\nabla\cdot(u(\phi(x,\tau))\psi(x,\tau))\right]d\tau.
\end{align*}

We will prove that there exists a constant $K>0$ such that for any $T\in(0,1]$,
\begin{align}\label{4.3}
\sup_{0<t<T}t^{\frac{1}{2}-\frac{3}{2p}}&\|B(\phi(\cdot,t),\psi(\cdot,t))\|_{L^p}\notag\\
&\le K\left(\sup_{0<t<T}t^{\frac{1}{2}-\frac{3}{2p}}\|\phi(\cdot,t))\|_{L^p}\right)\left(\sup_{0<t<T}t^{\frac{1}{2}-\frac{3}{2p}}\|\psi(\cdot,t)\|_{L^p}\right).
\end{align}
In view of the operator $M$ with $u=M[\theta]$ as given by \eqref{1.1-10}-\eqref{1.1-12}, for any $s\in\R$ and $p\in(1,\infty)$, there exists $K_1>0$ such that given $f\in W^{s,p}$, we have the following estimates on $\|M[f]\|_{W^{{s+2},p}}$:
\begin{align}
\label{4.2-1}
\|M[f]\|_{W^{{s+2},p}}\le K_1\|f\|_{W^{s,p}},
\end{align}
Using \eqref{2.6}, \eqref{4.1} and \eqref{4.2-1}, there exists $K>0$ such that, given $0<t<T$, we have
\begin{align*}
\|B(\phi(\cdot,t),\psi(\cdot,t)\|_{L^p}&\le\int_0^t\|G(t-\tau)\left[\nabla\cdot(u(\phi(\cdot,\tau))\psi(\cdot,\tau))\right]\|_{L^p}d\tau\\
&\le K\int_0^t(t-\tau)^{-\frac{1}{2}}\|\nabla\cdot(u(\phi(\cdot,\tau))\psi(\cdot,\tau))\|_{\dot{W}^{-1,p}}d\tau\\
&\le K\int_0^t(t-\tau)^{-\frac{1}{2}}\|u(\phi(\cdot,\tau))\psi(\cdot,\tau)\|_{L^p}d\tau\\
&\le K\int_0^t(t-\tau)^{-\frac{1}{2}}\|u(\phi(\cdot,\tau))\|_{L^\infty}\|\psi(\cdot,\tau)\|_{L^p}d\tau\\
&\le K\int_0^t(t-\tau)^{-\frac{1}{2}}\|u(\phi(\cdot,\tau))\|_{W^{2,p}}\|\psi(\cdot,\tau)\|_{L^p}d\tau\\
&\le K\int_0^t(t-\tau)^{-\frac{1}{2}}\|\phi(\cdot,\tau)\|_{L^p}\|\psi(\cdot,\tau)\|_{L^p}d\tau\\
&\le K\left[\int_0^t(t-\tau)^{-\frac{1}{2}}\tau^{-1+\frac{3}{p}}d\tau\right]\\
&\qquad\times\left(\sup_{0<\tau<T}\tau^{\frac{1}{2}-\frac{3}{2p}}\|\phi(\cdot,\tau)\|_{L^p}\right)\left(\sup_{0<\tau<T}\tau^{\frac{1}{2}-\frac{3}{2p}}\|\psi(\cdot,\tau)\|_{L^p}\right)\\
&\le K t^{-\frac{1}{2}+\frac{3}{2p}}T^\frac{3}{2p}\left(\int_0^1(1-z)^{-\frac{1}{2}}z^{-1+\frac{3}{p}}dz\right)\\
&\qquad\times\left(\sup_{0<\tau<T}\tau^{\frac{1}{2}-\frac{3}{2p}}\|\phi(\cdot,\tau)\|_{L^p}\right)\left(\sup_{0<\tau<T}\tau^{\frac{1}{2}-\frac{3}{2p}}\|\psi(\cdot,\tau)\|_{L^p}\right).\\
\end{align*}
By the assumption that $T\le1$ and $p\in(3,\infty)$, it implies
\begin{align*}
\mbox{$T^\frac{3}{2p}\le1$ and $\displaystyle\int_0^1(1-z)^{-\frac{1}{2}}z^{-1+\frac{3}{p}}dz<\infty$},
\end{align*}
and so \eqref{4.3} follows.

For any $T\in(0,1]$, we now define
$$\mathcal{E}_T=\{f\mbox{ measurable: }t^{\frac{1}{2}-\frac{3}{2p}}f\in C((0,T);L^p \}$$
and $\displaystyle\|f\|_{\mathcal{E}_T}=\sup_{0<t<T}t^{\frac{1}{2}-\frac{3}{2p}}\|f\|_{L^p}$. It is clear that $\mathcal{E}_T$ is a Banach space, and by \eqref{4.3}, we have
\begin{align*}
\mbox{$\|B(\phi,\psi)\|_{\mathcal{E}_T}\le K\|\phi\|_{\mathcal{E}_T}\|\psi\|_{\mathcal{E}_T}$ for all $\phi,\psi\in\mathcal{E}_T$.}
\end{align*}
Given $\theta_0\in L^3$, using \eqref{4.1} there exists $C_0>0$ such that,
\begin{align*}
\|G(t)\theta_0\|_{L^p}\le C_0t^{-\frac{1}{2}+\frac{3}{2p}}\|\theta_0\|_{L^3},
\end{align*}
so it implies for every $\theta_0\in L^3\cap W^{1,p}$,
\begin{align*}
\lim_{t\rightarrow0^+}t^{\frac{1}{2}-\frac{3}{2p}}\|G(t)\theta_0\|_{L^p}=0.
\end{align*}
Since $\overline{L^3\cap W^{1,p}}^{\|\cdot\|_{L^3}}=L^3$, we conclude that for $\theta_0\in L^3$,
\begin{align*}
\lim_{t\rightarrow0^+}t^{\frac{1}{2}-\frac{3}{2p}}\|G(t)\theta_0\|_{L^p}=0.
\end{align*}
Hence for any $\delta>0$, there exists $T\in(0,1)$ such that
\begin{align*}
\sup_{0<t<T}t^{\frac{1}{2}-\frac{3}{2p}}\|G(t)\theta_0\|_{L^p}\le\delta.
\end{align*}
We can apply Lemma~4.2 to obtain a unique solution $\theta^{\varepsilon_\kappa}$ to \eqref{4.3} such that
\begin{align}\label{4.4}
\sup_{0<t<T}t^{\frac{1}{2}-\frac{3}{2p}}\|\theta^{\varepsilon_\kappa}(\cdot,t)\|_{L^p}\le2\delta.
\end{align}
Next, we consider the Picard sequence defined by:
\begin{align*}
\theta_1(x,t)&=G(t)\theta_0(x),\\
\theta_{n+1}(x,t)&=\theta_1(x,t)+B(\theta_n(x,t),\theta_n(x,t))\mbox{, for $n\in\N$.}
\end{align*}
We notice that the solution $\theta^{\varepsilon_\kappa}$ as given by \eqref{4.4} can be obtained as the limit in $\mathcal{E}_T$ of $\{\theta_n\}_{n\in\N}$. Moreover,
\begin{align}\label{4.5}
\mbox{$\displaystyle\sup_{0<t<T}t^{\frac{1}{2}-\frac{3}{2p}}\|\theta_n(\cdot,t)\|_{L^p}=\|\theta_n\|_{\mathcal{E}_T}\le2\delta$ for all $n\in\N$.}
\end{align}

We now prove that for any $n\in\N$, the sequences $\{\theta_n\}_{n\in\N}$ and $\{t^{\frac{s}{2}+\frac{1}{2}-\frac{3}{2p}}\theta_n\}_{n\in\N}$ are uniformly bounded respectively in $L^\infty((0,T);L^3 )$ and $L^\infty((0,T);\dot{W}^{s,p} )$.

We first show that $\{t^{\frac{s}{2}+\frac{1}{2}-\frac{3}{2p}}\theta_n\}_{n\in\N}$ is uniformly bounded in $L^\infty((0,T);\dot{W}^{s,p} )$. Using \eqref{4.1}, there exists a constant $C_1>0$ such that
\begin{align*}
\|\theta_1(\cdot,t)\|_{\dot{W}^{s,p}}\le\|G(t)\theta_0\|_{\dot{W}^{s,p}}\le C_1 t^{-\frac{s}{2}-\frac{1}{2}+\frac{3}{2p}}\|\theta_0\|_{L^3}.
\end{align*}
For any $n\ge1$, using the definition of $\theta_n$, we have
\begin{align}\label{4.6}
\sup_{0<t<T}t^{\frac{s}{2}+\frac{1}{2}-\frac{3}{2p}}\|\theta_{n+1}(\cdot,t)\|_{\dot{W}^{s,p}}\le C_1\|\theta_0\|_{L^3}+\|B(\theta_n(\cdot,t),\theta_n(\cdot,t))\|_{\dot{W}^{s,p}}.
\end{align}
Using \eqref{2.6}, \eqref{4.1}, \eqref{4.2-1} and \eqref{4.5}, there exists a constant $\tilde{C_1}>0$ such that
\begin{align*}
&\|B(\theta_n(x,t),\theta_n(x,t))\|_{\dot{W}^{s,p}}\\
&\le\tilde{C_1}\int_0^t(t-\tau)^{-\frac{1}{2}(s+1)}\|\nabla\cdot(u(\theta_n(\cdot,\tau))\theta_n(\cdot,\tau))\|_{\dot{W}^{-1,p}}d\tau\\
&\le\tilde{C_1}\int_0^t(t-\tau)^{-\frac{1}{2}(s+1)}\|u(\theta_n(\cdot,\tau))\theta_n(\cdot,\tau)\|_{L^p}d\tau\\
&\le\tilde{C_1}\int_0^t(t-\tau)^{-\frac{1}{2}(s+1)}\|u(\theta_n(\cdot,\tau))\|_{L^\infty}\|\theta_n(\cdot,\tau)\|_{L^p}d\tau\\
&\le\tilde{C_1}\int_0^t(t-\tau)^{-\frac{1}{2}(s+1)}\|u(\theta_n(\cdot,\tau))\|_{W^{2,p}}\|\theta_n(\cdot,\tau)\|_{L^p}d\tau\\
&\le\tilde{C_1}\int_0^t(t-\tau)^{-\frac{1}{2}(s+1)}\|\theta_n(\cdot,\tau)\|_{L^p}\|\theta_n(\cdot,\tau)\|_{L^p}d\tau\\
&\le\tilde{C_1}\left(\int_0^t(t-\tau)^{-\frac{1}{2}(s+1)}\tau^{-1+\frac{3}{p}}d\tau\right)\left(\sup_{0<t<T}\tau^{\frac{1}{2}-\frac{3}{2p}}\|\theta_{n}(\cdot,\tau)\|_{L^p}\right)^2\\
&\le4\delta^2\tilde{C_1} t^{-\frac{1}{2}s-\frac{1}{2}+\frac{3}{2p}}t^{\frac{3}{2p}}\left(\int_0^1(1-z)^{-\frac{1}{2}(s+1)}z^{-1+\frac{3}{p}}dz\right).
\end{align*}
By the assumptions on $s$ and $p$, we have $-\frac{1}{2}(s+1)>-1$, $-1+\frac{3}{p}>-1$ and $\frac{3}{2p}>0$. Hence
\begin{align*}
\mbox{$t^{\frac{3}{2p}}\le T^{\frac{3}{2p}}\le1$ and $\displaystyle\int_0^1(1-z)^{-\frac{1}{2}(s+1)}z^{-1+\frac{3}{p}}dz<\infty$.}
\end{align*}
Replacing $\tilde{C_1}$ if necessary, choosing $\delta<\frac{1}{2\sqrt{\tilde{C_1}}}$ and reducing $T$, we conclude from \eqref{4.6} that
\begin{align*}
\sup_{0<t<T}t^{\frac{1}{2}(s-s)+\frac{1}{2}-\frac{3}{2p}}\|\theta_{n+1}(\cdot,t)\|_{\dot{W}^{s,p}}\le C_1\|\theta_0\|_{\dot{W}^{s,3}}+1.
\end{align*}
Next we prove that $\{\theta_n\}_{n\in\N}$ is uniformly bounded in $L^\infty((0,T);L^3 )$. Again using \eqref{4.1}, there exists a constant $C_2>0$ such that
\begin{align*}
\|\theta_1(\cdot,t)\|_{L^3}\le\|G(t)\theta_0\|_{L^3}\le C_2\|\theta_0\|_{L^3},
\end{align*}
and by the definition of $\theta_n$, for $n\ge1$,
\begin{align*}
\sup_{0<t<T}\|\theta_{n+1}(\cdot,t)\|_{L^3}\le C_2\|\theta_0\|_{L^3}+\|B(\theta_n(\cdot,t),\theta_n(\cdot,t)\|_{L^3}.
\end{align*}
Using \eqref{2.6}, \eqref{4.1}, \eqref{4.2-1} and \eqref{4.5}, there exists a constant $\tilde{C_2}>0$ such that the term $\|B(\theta_n(\cdot,t),\theta_n(\cdot,t)\|_{L^3}$ can be estimated as follows.
\begin{align*}
&\|B(\theta_n(\cdot,t),\theta_n(\cdot,t)\|_{L^3}\\
&\le\tilde{C_2}\int_0^t(t-\tau)^{-\frac{1}{2}}\|\nabla\cdot(u(\theta_n(\cdot,\tau))\theta_n(\cdot,\tau))\|_{\dot{W}^{-1,3}}d\tau\\
&\le\tilde{C_2}\int_0^t(t-\tau)^{-\frac{1}{2}}\|u(\theta_n(\cdot,\tau))\theta_n(\cdot,\tau)\|_{L^3}d\tau\\
&\le\tilde{C_2}\int_0^t(t-\tau)^{-\frac{1}{2}}\|u(\theta_n(\cdot,\tau))\|_{L^\infty}\|\theta_n(\cdot,\tau)\|_{L^3}d\tau\\
&\le\tilde{C_2}\int_0^t(t-\tau)^{-\frac{1}{2}}\|u(\theta_n(\cdot,\tau))\|_{W^{2,p}}\|\theta_n(\cdot,\tau)\|_{L^3}d\tau\\
&\le\tilde{C_2}\int_0^t(t-\tau)^{-\frac{1}{2}}\|\theta_n(\cdot,\tau)\|_{L^p}\|\theta_n(\cdot,\tau)\|_{L^3}d\tau\\
&\le2\varepsilon\tilde{C_2}\int_0^t(t-\tau)^{-\frac{1}{2}}\tau^{-\frac{1}{2}+\frac{3}{2p}}\|\theta_n(\cdot,\tau)\|_{L^3}d\tau\\
&\le2\varepsilon\tilde{C_2}T^\frac{3}{2p}\left(\int_0^1(1-z)^{-\frac{1}{2}}z^{-\frac{1}{2}+\frac{3}{2p}}dz\right)\left(\sup_{0<\tau<T}\|\theta_n(\cdot,\tau)\|_{L^3}\right).
\end{align*}
Since $T\le1$ and $-\frac{1}{2}+\frac{3}{2p}>-1$, replacing $\tilde{C_2}$ if necessary, we obtain
\begin{align}\label{4.7}
\sup_{0<t<T}\|\theta_{n+1}(\cdot,t)\|_{L^3}\le C_2\|\theta_0\|_{L^3}+2\delta\tilde{C_2}\sup_{0<\tau<T}\|\theta_n(\cdot,\tau)\|_{L^3}.
\end{align}
By choosing $\delta$ small enough and reducing $T$ if necessary, an induction argument on \eqref{4.7} shows that $\{\theta_n\}_{n\in\N}$ is uniformly bounded in $L^\infty((0,T);L^3 )$.

Since the sequence $\{\theta_n\}_{n\in\N}$ is uniformly bounded in $L^\infty((0,T);L^3 )$, we can see that there exists a subsequence of $\{\theta_n\}_{n\in\N}$ which converges towards some $\tilde{\theta}$ weak-* in $L^\infty((0,T);L^3 )$ and consequently in $\mathcal{D}'(\R^3\times(0,T))$. On the other hand, we know that $\theta_n\rightarrow\theta^{\varepsilon_\kappa}$ in $\mathcal{E}_T$, which implies convergence in $\mathcal{D}'(\R^3\times(0,T))$ as well. Therefore,
\begin{align*}
\theta^{\varepsilon_\kappa}=\tilde\theta\in L^\infty((0,T);L^3 ).
\end{align*}
The time-continuity of $\theta^{\varepsilon_\kappa}$ follows by using the fact that $\theta^{\varepsilon_\kappa}$ belongs to $\mathcal{E}_T$ and it solves \eqref{4.2} (see Kato \cite{K92}-\cite{K94}). Hence we obtain \eqref{4.1-1}.

By similar methods, we can prove that $t^{\frac{s}{2}+\frac{1}{2}-\frac{3}{2p}}\theta^{\varepsilon_\kappa}\in C((0,T);\dot{W}^{s,p} )$ as well, and we finish the proof of Theorem~5.3.
\end{proof}
\begin{remark}
We point out that, the indexes $s$, $p$ as appeared in Theorem~5.3 are independent of each other. In other words, there is no further restriction on $s,p$ except $s\in[0,1)$ and $p\in(3,\infty)$.
\end{remark}

We give the proof of Theorem~2.3 which involves showing the global-in-time existence of $\theta^{\varepsilon_\kappa}$ and the time decay of $\|\theta^{\varepsilon_\kappa}(\cdot,t)\|_{\dot{W}^{s,p}}$ for all $s\in(0,1]$ and $p\in(3,\infty)$ when $t>0$.
\begin{proof}[{\bf proof of Theorem~2.3}]
In view of Theorem~5.3, we first extend the local solution $\theta^{\varepsilon_\kappa}$ satisfying \eqref{4.1-1}-\eqref{4.1-2}. Standard parabolic theory (see for example \cite{K92}) shows that
\begin{align}\label{4.10-1}
\partial^m_t \nabla^l_x \theta^{\varepsilon_\kappa}(x,t)\in C((0,T);L^3 \cap L^p )
\end{align}
for all $p>3$, $m\in\{0\}\cup\N$ and multi-index $l\in (\{0\}\cup\N)^3$, where $T>0$ is the existence time obtained in Theorem~5.3. Hence, we have $\theta^{\varepsilon_\kappa}\in C^\infty(\R^3\times(0,T))$ and $\theta(t)\in L^\infty $ for all $t\in(0,T)$.

Now we prove \eqref{1.4-1}. The following argument is reminiscent of the one given in \cite{FL14}. We notice that for $\theta_0\in L^3 \cap L^p $, the existence time $T$ (as in Theorem~5.3) can be chosen as
\begin{align*}
T=\left(\frac{\delta}{C_0\|\theta_0\|_{L^p}}\right)^{\frac{2p}{p-3}},
\end{align*}
where $0<\delta<\frac{1}{4K}$, and $K,C_0>0$ are given as in the proof of Theorem~5.3.

Let $\theta_0\in L^3 $. By Theorem~5.3, there exists constants $d_1,T_0>0$ and a unique solution $\theta^{\varepsilon_\kappa}$ to \eqref{1.1}-\eqref{1.1-13} defined on $(0,T_0)$ such that
\begin{align*}
\mbox{$\displaystyle\sup_{0<t<T_0}t^{\frac{1}{2}-\frac{3}{2p}}\|\theta^{\varepsilon_\kappa}(\cdot,t)\|_{L^p}\le d_1$, and $\displaystyle\sup_{0<t<T_0}\|\theta^{\varepsilon_\kappa}(\cdot,t)\|_{L^3}\le\|\theta_0\|_{L^3}$}.
\end{align*}
Next we define
\begin{align*}
T=\sup\left\{t>0:\theta^{\varepsilon_\kappa}\in C((0,t);L^3\cap L^p), \sup_{0<s<t}t^{\frac{1}{2}-\frac{3}{2p}}\|\theta^{\varepsilon_\kappa}(\cdot,s)\|_{L^p}<\infty, \sup_{0<s<t}\|\theta^{\varepsilon_\kappa}(\cdot,s)\|_{L^3}\le\|\theta_0\|_{L^3}.\right\}
\end{align*}
We claim that $T=\infty$. We argue by contradiction. Suppose that $T<\infty$, and we let $\delta\in(0,\frac{T}{2})$ which will be chosen later. By Theorem~5.3, we have $\theta^{\varepsilon_\kappa}(T-\delta)\in L^3\cap L^p$ and $\|\theta^{\varepsilon_\kappa}(T-\delta)\|_{L^p}\le\|\theta^{\varepsilon_\kappa}(\frac{T}{2})\|_{L^p}$. By choosing $\theta^{\varepsilon_\kappa}(T-\delta)$ as initial data, given $d_2\in(0,\frac{1}{4d_1})$, there exists $T_1>0$ and a unique solution $\tilde\theta^{\varepsilon_\kappa}$ to \eqref{1.1}-\eqref{1.1-13} defined on $(T-\delta,T-\delta+T_1)$ such that
\begin{align*}
\tilde\theta^{\varepsilon_\kappa}\in C((T-\delta,T-\delta+T_1);L^3\cap L^p),
\end{align*}
\begin{align*}
\sup_{T-\delta<t<T-\delta+T_1}[t-(T-\delta)]^{\frac{1}{2}-\frac{3}{2p}}\|\tilde\theta^{\varepsilon_\kappa}(t)\|_{L^p}&\le 2d_1,\\
\sup_{T-\delta<t<T-\delta+T_1}\|\tilde\theta^{\varepsilon_\kappa}(t)\|_{L^p}&\le\|\theta^{\varepsilon_\kappa}(T-\delta)\|_{L^3}.
\end{align*}
By the uniqueness of solution, we have $\theta^{\varepsilon_\kappa}(t)=\tilde\theta^{\varepsilon_\kappa}(t)$ for all $t\in(T-\delta,T)$. We now choose
\begin{align*}
\mbox{$\displaystyle T_1=\min\left\{\left(\frac{d_1}{C_0\|\theta(\frac{T}{2})\|_{L^p}}\right)^{\frac{2p}{p-3}},T\right\}$, $\delta\in(0,\min\{\frac{T}{2},T_1\})$ and $T_2=T-\delta+T_1$},
\end{align*}
then $T<T_2$ and there exists a solution $\bar\theta^{\varepsilon_\kappa}\in C((0,T_2);L^3\cap L^p)$ to \eqref{1.1}-\eqref{1.1-13} such that
\begin{align*}
\sup_{0<s<\tilde T}t^{\frac{1}{2}-\frac{3}{2p}}\|\theta^{\varepsilon_\kappa}(\cdot,s)\|_{L^p}<\infty, \sup_{0<s<\tilde T}\|\theta^{\varepsilon_\kappa}(\cdot,s)\|_{L^3}\le\|\theta_0\|_{L^3}
\end{align*}
for all $\tilde T\in(0,T_2)$, which contradicts the maximality of $T$. Hence we must have $T=\infty$ and we finish the proof of \eqref{1.4-1}.

Finally we consider \eqref{1.4-2}. We first claim that there exists a constant $C_3>0$ independent of $t$ such that
\begin{align}\label{4.10-2}
\|\theta^{\varepsilon_\kappa}(\cdot,t)\|_{L^\infty}\le C_3 t^{-\frac{1}{2}}
\end{align}
for all $t>0$. The proof for \eqref{4.10-2} is the same as the one given in \cite{FL14} and we include here for completeness. Because $\theta^{\varepsilon_\kappa}$ exists for all time, it follows from \eqref{2.6} and \eqref{4.10-1} that
\begin{align*}
\mbox{$\|\theta^{\varepsilon_\kappa}(\cdot,t)\|_{L^\infty}<\infty$ for all $t>0$}.
\end{align*}
In particular, $\theta^{\varepsilon_\kappa}(t)$ satisfies \eqref{1.1} in the classically sense for all $t>0$. In view of $\nabla\cdot(u(\theta^{\varepsilon_\kappa}))=0$, we integrate \eqref{1.1} by parts to obtain, for any $q\ge1$,
\begin{align}\label{4.8}
\frac{\partial}{\partial t}\|\theta^{\varepsilon_\kappa}(\cdot,t)\|_{L^q}^q&=q\int \theta^{\varepsilon_\kappa}(x,t)^{q-1}\frac{\partial}{\partial t}\theta^{\varepsilon_\kappa}(x,t)dx\notag\\
&=q\int \theta^{\varepsilon_\kappa}(x,t)^{q-1}(\varepsilon_\kappa\Delta\theta^{\varepsilon_\kappa}(x,t)-\nabla\cdot(u(\theta^{\varepsilon_\kappa})\theta^{\varepsilon_\kappa}(x,t)))dx\notag\\
&=q\int \theta^{\varepsilon_\kappa}(x,t)^{q-1}(\varepsilon_\kappa\Delta\theta^{\varepsilon_\kappa}(x,t))dx\notag\\
&\le-\varepsilon_\kappa\int |\nabla(\theta^{\varepsilon_\kappa}(x,t)^\frac{q}{2})|^2dx.
\end{align}
for all $t>0$. Using the Gagliardo-Nirenberg inequality for $\R^3$, there exists a constant $C_4>0$ such that
\begin{align*}
\|\theta^{\varepsilon_\kappa}(\cdot,t)\|^\frac{5q}{3}_{L^q}\le C_4\|\theta^{\varepsilon_\kappa}(\cdot,t)\|_{L^\frac{q}{2}}^\frac{2q}{3}\|\nabla(\theta^{\varepsilon_\kappa}(\cdot,t)^\frac{q}{2})\|_{L^2}^2,
\end{align*}
and hence we obtain the following inequality
\begin{align}\label{4.9}
\frac{\partial}{\partial t}\|\theta^{\varepsilon_\kappa}(\cdot,t)\|_{L^q}^q\le -C_4 (\|\theta^{\varepsilon_\kappa}(\cdot,t)\|_{L^\frac{q}{2}}^\frac{q}{2})^{-\frac{4}{3}}(\|\theta^{\varepsilon_\kappa}(\cdot,t)\|_{L^q}^q)^{\frac{5}{3}}.
\end{align}
By considering the sequence $q_k=3\cdot 2^k$ for $k\ge0$, we can solve \eqref{4.9} inductively to get
\begin{align*}
\|\theta^{\varepsilon_\kappa}(\cdot,t)\|_{L^{q_k}}^{q_k}\le A_{q_k}t^{-\frac{3}{2}(2^k-1)},
\end{align*}
where $A_{q_k}$ are defined by
\begin{align*}
\mbox{$A_{q_0}=\|\theta_0\|^3_{L^3}, \quad A_{q_k}^\frac{1}{q_k}=\left(\frac{3(2^k-1)}{2C_4}\right)^\frac{1}{2^{k+1}}A_{q_{k-1}}^\frac{1}{q_{k-1}}$ \,for $k\in\N$,}
\end{align*}
and we have used \eqref{4.8} to get $\|\theta^{\varepsilon_\kappa}(\cdot,t)\|^3_{L^3}\le A_{q_0}$ for all $t>0$. Therefore,
\begin{align*}
\|\theta^{\varepsilon_\kappa}(\cdot,t)\|_{L^{q_k}}\le \left(\prod_{i=1}^k\left(\frac{3(2^i-1)}{2C_4}\right)^\frac{1}{2^{i+1}}\right)A_{q_0}^\frac{1}{q_0} t^{-\frac{(2^k-1)}{2^{k+1}}}.
\end{align*}
Taking $k\rightarrow\infty$, we obtain a constant $C_5>0$ such that, for $t>0$,
\begin{align*}
\|\theta^{\varepsilon_\kappa}(\cdot,t)\|_{L^\infty}\le C_5\|\theta_0\|_{L^3}t^{-\frac{1}{2}},
\end{align*}
which implies \eqref{4.10-2} by choosing $C_3=C_5\|\theta_0\|_{L^3}$.

By interpolation, for $p>3$, we can further obtain a constant $C_6>0$ such that
\begin{align}\label{4.11}
\|\theta^{\varepsilon_\kappa}(\cdot,t)\|_{L^p}\le C_6 t^{-\frac{1}{2}+\frac{3}{2p}}.
\end{align}
To estimate $\|\theta^{\varepsilon_\kappa}(\cdot,t)\|_{W^{s,p}}$, using \eqref{4.1}, \eqref{4.2-1}, \eqref{4.10-2}, \eqref{4.11} and the integral equation \eqref{4.2}, there exists constants $C_7,C_8>0$ such that for all $t>0$,
\begin{align*}
\|\theta^{\varepsilon_\kappa}(\cdot,t)\|_{\dot{W}^{s,p}}&\le\|G(t)\theta_0\|_{\dot{W}^{s,p}}+\|B(\theta^{\varepsilon_\kappa},\theta^{\varepsilon_\kappa})(\cdot,t)\|_{\dot{W}^{s,p}}\\
&\le C_1 t^{-\frac{s}{2}-\frac{1}{2}+\frac{3}{2p}}\|\theta_0\|_{L^3}+C_7\int_0^t(t-\tau)^{-\frac{(s+1)}{2}}\|\nabla\cdot(u(\theta^{\varepsilon_\kappa})\theta^{\varepsilon_\kappa})\|_{\dot{W}^{-1,p}}d\tau\\
&\le C_1 t^{-\frac{s}{2}-\frac{1}{2}+\frac{3}{2p}}\|\theta_0\|_{L^3}+C_7\int_0^t(t-\tau)^{-\frac{(s+1)}{2}}\|u(\theta^{\varepsilon_\kappa})\theta^{\varepsilon_\kappa}\|_{L^p}d\tau\\
&\le C_1 t^{-\frac{s}{2}-\frac{1}{2}+\frac{3}{2p}}\|\theta_0\|_{L^3}+C_7\int_0^t(t-\tau)^{-\frac{(s+1)}{2}}\|u(\theta^{\varepsilon_\kappa})\|_{L^p}\|\theta^{\varepsilon_\kappa}\|_{L^\infty}d\tau\\
&\le C_1 t^{-\frac{s}{2}-\frac{1}{2}+\frac{3}{2p}}\|\theta_0\|_{L^3}\\
&\qquad+C_7\left(\int_0^t(t-\tau)^{-\frac{(s+1)}{2}}\tau^{-1+\frac{3}{2p}}d\tau\right)\left(\sup_{\tau>0}\tau^{\frac{1}{2}-\frac{3}{2p}}\|\theta^{\varepsilon_\kappa}\|_{L^p}\right)\left(\sup_{\tau>0}\tau^\frac{1}{2}\|\theta^{\varepsilon_\kappa}\|_{L^\infty}\right)\\
&\le C_1 t^{-\frac{s}{2}-\frac{1}{2}+\frac{3}{2p}}\|\theta_0\|_{L^3}+C_3C_6C_7t^{-\frac{s}{2}-\frac{1}{2}+\frac{3}{2p}}\left(\int_0^1(1-z)^{-\frac{(s+1)}{2}}z^{-1+\frac{3}{2p}}dz\right)\\
&\le C_8 t^{-\frac{s}{2}-\frac{1}{2}+\frac{3}{2p}},
\end{align*}
hence \eqref{1.4-2} follows and we have $\|\theta^{\varepsilon_\kappa}(\cdot,t)\|_{\dot{W}^{s,p}}\rightarrow0$ as $t\rightarrow\infty$. We finish the proof of Theorem~2.3.
\end{proof}

\section{Convergence of solutions when $\varepsilon_\kappa\rightarrow0$}

Let $\theta^{\varepsilon_\kappa}$ be the solution to \eqref{1.1}-\eqref{1.1-13} when $\varepsilon_\kappa>0$ with initial data $\theta_0\in L^3 $ as obtained in Theorem~2.3. We now consider the convergence of $\theta^{\varepsilon_\kappa}$ as $\varepsilon_\kappa\rightarrow0$. We first prove the following lemma which gives some uniform bounds on $\theta^{\varepsilon_\kappa}$ independent of time and $\varepsilon_\kappa$.

\begin{lemma}
Given $\theta_0\in L^3 $, there exist constants $c_1,c_2>0$ independent of $t$ and $\varepsilon_\kappa$ such that, for any $t\ge0$ and $\varepsilon_\kappa>0$,
\begin{align}\label{5.1}
\|\theta^{\varepsilon_\kappa}(\cdot,t)\|_{L^3}\le c_1\|\theta_0\|_{L^3},
\end{align}
\begin{align}\label{5.2}
\|u(\theta^{\varepsilon_\kappa})(\cdot,t)\|_{L^\infty}\le c_2\|\theta(\cdot,t)\|_{L^3}\le c_2\|\theta_0\|_{L^3}.
\end{align}
Furthermore, for any $q\ge1$, there exists $c_3(q)>0$ independent of $t$ and $\varepsilon_\kappa$ such that
\begin{align}\label{5.3}
\|u(\theta^{\varepsilon_\kappa})(\cdot,t)\|_{W^{2,q}}\le c_3(q)\|\theta(\cdot,t)\|_{L^q}\le c_3(q)\|\theta_0\|_{L^q}.
\end{align}
\end{lemma}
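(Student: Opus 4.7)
The plan is to obtain all three bounds from a single $L^p$ energy identity together with the two-order smoothing property of the operator $M$, exactly as in Lemma~4.1. The key point is that, by Theorem~5.3 and standard parabolic regularity, $\theta^{\varepsilon_\kappa}(\cdot,t)$ is smooth for $t>0$, so the formal manipulations below are rigorous, and the resulting estimates do not depend on $\varepsilon_\kappa$.

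First I would prove \eqref{5.1}. Multiply the equation $\partial_t\theta^{\varepsilon_\kappa}+u\cdot\nabla\theta^{\varepsilon_\kappa}=\varepsilon_\kappa\Delta\theta^{\varepsilon_\kappa}$ by $|\theta^{\varepsilon_\kappa}|\,\theta^{\varepsilon_\kappa}$ and integrate over $\mathbb{T}^3$. Using $\nabla\cdot u=0$ kills the transport term, and integration by parts on the Laplacian yields
\begin{equation*}
\frac{1}{3}\frac{d}{dt}\|\theta^{\varepsilon_\kappa}(\cdot,t)\|_{L^3}^3
=-2\varepsilon_\kappa\!\int_{\mathbb{T}^3}|\theta^{\varepsilon_\kappa}|\,|\nabla\theta^{\varepsilon_\kappa}|^2\,dx\le 0.
\end{equation*}
Integrating in time gives $\|\theta^{\varepsilon_\kappa}(\cdot,t)\|_{L^3}\le\|\theta_0\|_{L^3}$ with a constant $c_1$ that is independent of both $t$ and $\varepsilon_\kappa$ (indeed $c_1=1$ works). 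The same argument applied with $|\theta^{\varepsilon_\kappa}|^{q-2}\theta^{\varepsilon_\kappa}$ for any $q\ge 2$ gives $\|\theta^{\varepsilon_\kappa}(\cdot,t)\|_{L^q}\le\|\theta_0\|_{L^q}$, which is what is needed for the second inequalities in \eqref{5.2}--\eqref{5.3}; for $1\le q<2$ one uses the same computation with a standard regularization $(\eta^2+|\theta^{\varepsilon_\kappa}|^2)^{(q-2)/2}\theta^{\varepsilon_\kappa}$ and lets $\eta\to 0^+$.

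Next I would establish \eqref{5.3}. Because the symbol $\widehat{M}_j(k)$ in \eqref{1.1-10}--\eqref{1.1-12} satisfies $|\widehat{M}_j(k)|\le C_*|k|^{-2}$ uniformly on $\mathbb{Z}^3\setminus\{k_3=0\}$, the operator $M$ is smoothing of order $2$, and the standard Mihlin/Marcinkiewicz Fourier multiplier theorem (as used in Lemma~4.1) yields, for each $q\ge 1$, a constant $c_3(q)>0$, depending only on $q$, $N$, and $\varepsilon_\nu$, such that
\begin{equation*}
\|u(\theta^{\varepsilon_\kappa})(\cdot,t)\|_{W^{2,q}}\le c_3(q)\,\|\theta^{\varepsilon_\kappa}(\cdot,t)\|_{L^q}.
\end{equation*}
Crucially this constant does not depend on $t$ or $\varepsilon_\kappa$. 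Combining with the $L^q$ bound derived above gives the full chain of \eqref{5.3}.

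Finally, for \eqref{5.2} I would specialize \eqref{5.3} to $q=3$ and invoke the embedding \eqref{2.3}, $\|f\|_{L^\infty}\le C\|f\|_{W^{2,3}}$, to obtain
\begin{equation*}
\|u(\theta^{\varepsilon_\kappa})(\cdot,t)\|_{L^\infty}\le C\|u(\theta^{\varepsilon_\kappa})(\cdot,t)\|_{W^{2,3}}\le Cc_3(3)\|\theta^{\varepsilon_\kappa}(\cdot,t)\|_{L^3}\le c_2\|\theta_0\|_{L^3},
\end{equation*}
with $c_2$ independent of $t$ and $\varepsilon_\kappa$. There is no real obstacle in the argument; the only subtlety to flag is making sure that the Fourier multiplier bound is uniform in $\varepsilon_\nu$-dependent constants but independent of $\varepsilon_\kappa$, which is clear since $M$ does not involve $\varepsilon_\kappa$, and that the $L^p$ energy estimate uses only the divergence-free property and the sign of the diffusion term, so it produces no factor of $\varepsilon_\kappa$ on the right-hand side.
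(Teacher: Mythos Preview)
Your proposal is correct and follows essentially the same route as the paper: the $L^3$ bound comes from the $q=3$ case of the energy identity \eqref{4.8} (the paper simply cites \eqref{4.8} with $q=3$), and the bounds on $u$ are obtained, as in Lemmas~4.1--4.2, from the order-two smoothing of $M$ via the Fourier multiplier theorem together with the embedding \eqref{2.3}. If anything, you have written out more detail than the paper, which omits the arguments for \eqref{5.2}--\eqref{5.3} by referring back to Lemmas~4.1--4.2.
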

\begin{proof}
To show \eqref{5.1}, We choose $q=3$ in \eqref{4.8} to obtain
\begin{align*}
\frac{\partial}{\partial t}\|\theta^{\varepsilon_\kappa}(\cdot,t)\|_{L^3}^3&=2\int \theta^{\varepsilon_\kappa}(x,t)^2\frac{\partial}{\partial t}\theta^{\varepsilon_\kappa}(x,t)dx\le-\varepsilon_\kappa\int |\theta^{\varepsilon_\kappa}\nabla\theta^{\varepsilon_\kappa}(x,t)|^2dx,
\end{align*}
and \eqref{5.1} follows. \eqref{5.2}-\eqref{5.3} can be proved by similar arguments given in the proof of Lemma~4.1-4.2, and we omit the details here.
\end{proof}
The convergence of $\{\theta^{\varepsilon_\kappa}\}_{\varepsilon_\kappa>0}$ can be proved by a standard argument involving weak limits (for example, see Hoff \cite{H97}), while the convergence of $\displaystyle\varepsilon_\kappa\int_0^T\int |\nabla\theta^{\varepsilon_\kappa}|^2$ can be shown by controlling the term $\|\nabla u(\theta^{\varepsilon_\kappa})(\cdot,t)\|_{L^\infty}$ when $\nabla\theta_0\in L^2 $.
\begin{proof}[{\bf proof of Theorem~2.5}]
Without loss of generality, we assume that $\varepsilon_\kappa\le1$ and $\|\theta_0\|_{L^3}\le1$. Let $\{t_j\}$ be a countable dense subset of $[0,\infty)$. By \eqref{5.1}, we have that $\{\theta^{\varepsilon_\kappa}\}_{\varepsilon_\kappa>0}$ is bounded in $L^3 $, uniformly in $\varepsilon_\kappa$. Using a diagonalization process, we can obtain a sequence $\varepsilon_{\kappa_n}$ with $\displaystyle\lim_{n\rightarrow\infty}\varepsilon_{\kappa_n}=0$ for which $\theta^{\varepsilon_{\kappa_j}}$ converges weakly in $L^3 $, say to $\tilde\theta(\cdot,t_j)$ for each $t_j$.

Given $\phi\in W^{2,\frac{3}{2}}$, using \eqref{5.1}-\eqref{5.3}, we have for all $\varepsilon_\kappa>0$,
\begin{align*}
&\left|\int \theta^{\varepsilon_\kappa}(x,\cdot)dx\Big|_{t_1}^{t_2}\right|\\
&\le\left|\int_{t_1}^{t_2}\int \varepsilon_\kappa(\Delta\theta^{\varepsilon_\kappa})\phi dxdt-\int_{t_1}^{t_2}\int \phi u(\theta^{\varepsilon_\kappa})\cdot\nabla\theta^{\varepsilon_\kappa} dxdt\right|\\
&\le\varepsilon_\kappa\int_{t_1}^{t_2}\int |\theta^{\varepsilon_\kappa}||\Delta\phi|dxdt+\int_{t_1}^{t_2}\|u(\theta^{\varepsilon_\kappa}(\cdot,t))\|_{L^\infty}\|\theta^{\varepsilon_\kappa}(\cdot,t)\|_{L^3}\|\nabla\phi(\cdot,t)\|_{L^\frac{3}{2}}dt\\
&\le c_4|t_1-t_2|\|\phi\|_{W^{2,\frac{3}{2}}}\|\theta_0\|_{L^3},
\end{align*}
for some constant $c_4>0$ independent of time and $\varepsilon_\kappa$. So it follows that $\{\theta^{\varepsilon_{\kappa_n}}(\cdot,t)\}_{n\in\N}$ converges weakly to an element $\tilde\theta(\cdot,t)\in W^{-2,3} $ for every $t\in[0,\infty)$. On the other hand, using the uniform bound \eqref{5.1}, we have
\begin{align*}
\mbox{$\|\theta^{\varepsilon_{\kappa_n}}(\cdot,t)\|_{L^3}\le c_1\|\theta_0\|_{L^3}$ for all $\varepsilon_{\kappa_n}$.}
\end{align*}
It shows that every subsequence of $\theta^{\varepsilon_{\kappa_n}}$ has a further subsequence (still call it $\theta^{\varepsilon_{\kappa_n}}$ for simplicity) which converges weakly in $L^3 $, necessarily to $\tilde\theta(\cdot,t)$, for every $t\in[0,\infty)$. Hence $\tilde\theta(\cdot,t)\in L^3 $. The assertion that $\tilde\theta$ solves \eqref{1.1}-\eqref{1.1-13} with $\varepsilon_\kappa=0$ in the weak sense can be verified in a very similar way as given by Friedlander and Vicol in \cite{FV11a} for the {\it inviscid} MG equation (i.e. $\varepsilon_\nu=0$) (see Appendix A in \cite{FV11a}). By the uniqueness of the solution $\theta$ to \eqref{1.1}-\eqref{1.1-13} with $\varepsilon_\kappa=0$ as obtained in Theorem~2.1, we conclude $\tilde\theta=\theta$ and therefore $\theta^{\varepsilon_{\kappa_n}}(\cdot,t)\rightarrow\theta(\cdot,t)$ weakly in $L^3 $ for every $t\in[0,\infty)$.

Finally, it remains to prove \eqref{1.7}. We assume further that $\nabla\theta_0\in L^2 $.
in view of \eqref{4.10-1}, we can differentiate $\partial_t \theta^{\varepsilon_\kappa}+u(\theta^{\varepsilon_\kappa})\cdot\nabla\theta^{\varepsilon_\kappa}=\varepsilon_\kappa\Delta\theta^{\varepsilon_\kappa}$ with respect to $x$ and integrate to obtain
\begin{align*}
\frac{\partial}{\partial t}\int |\nabla\theta^{\varepsilon_\kappa}(t)|^2dx+\frac{\varepsilon_\kappa}{2}\int |\Delta\theta^{\varepsilon_\kappa}(t)|^2\le\int \|\nabla u(\theta^{\varepsilon_\kappa})(t)\|_{L^\infty}|\nabla\theta^{\varepsilon_\kappa}(t)|^2dx.
\end{align*}
for all $t\in(0,T)$. Using Gronwall's inequality, there exists a constant $c_5>0$ independent of $t,\varepsilon_\kappa$ such that
\begin{align*}
\|\nabla\theta^{\varepsilon_\kappa}(\cdot,t)\|_{L^2}&\le c_5\|\nabla\theta_0\|_{L^2}\exp\left(\int_0^t\|\nabla u(\theta^{\varepsilon_\kappa})(\cdot,\tau)\|_{L^\infty}d\tau\right)\\
&\le c_5\|\nabla\theta_0\|_{L^2}\exp\left(\int_0^t\|\nabla u(\theta^{\varepsilon_\kappa})(\cdot,\tau)\|_{L^\infty}d\tau\right).
\end{align*}
It suffices to estimate $\|\nabla u(\theta^{\varepsilon_\kappa})(\cdot,\tau)\|_{L^\infty}$. Using \eqref{2.3-1},  \eqref{2.6} and \eqref{5.3}, there exists a constant $c_6>0$ independent of $\tau,\varepsilon_\kappa$ such that for all $\tau\in(0,t)$,
\begin{align*}
\|\nabla u(\theta^{\varepsilon_\kappa})(\cdot,\tau)\|_{L^\infty}&\le c_6\left(\|D_x u(\theta^{\varepsilon_\kappa})(\cdot,\tau)\|_{L^6}+\|D^2_x u(\theta^{\varepsilon_\kappa})(\cdot,\tau)\|_{L^6}\right)\\
&\le c_6\|\theta_0 \|_{L^6}\\
&\le c_6\|\nabla\theta_0 \|_{L^2}.
\end{align*}
Therefore we have
\begin{align}\label{5.4}
\|\nabla\theta^{\varepsilon_\kappa}(\cdot,t)\|_{L^2}\le c_5\|\nabla\theta_0\|_{L^2}\exp\left(t\,c_6\|\nabla\theta_0\|_{L^2}\right).
\end{align}
Integrating over $t$, we conclude from \eqref{5.4} that
\begin{align*}
\varepsilon_\kappa\int_0^T\int |\nabla\theta^{\varepsilon_\kappa}(x,t)|^2dxdt\le\varepsilon_\kappa\left(\frac{c_5^2\|\nabla\theta_0\|_{L^2}}{2c_6}\right)\left[\exp\left(2Tc_6\|\theta_0\|_{L^2}\right)-1\right]
\end{align*}
and \eqref{1.7} follows immediately by taking $\varepsilon_\kappa\rightarrow0$. We finish the proof of Theorem~2.5.
\end{proof}

\section{An example of unstable eigenvalues for the MG system}

In this section we demonstrate the existence of unstable eigenvalues for the forced MG equation
\begin{align}\label{7.1}
\partial_t \theta + u \cdot \nabla\theta = \varepsilon_\kappa \Delta \theta + S
\end{align}
where the divergence free velocity $u$ is obtained from $\theta$ via $u_j = (\widehat{M}_j \widehat{\theta})^\nu$ with the Fourier multiplier symbols given by (1.10)-(1.13). We consider perturbations of a particular steady state, namely
\begin{align}\label{7.2}
\Theta_0 = A\sin m x_3, \quad U_0 = 0, \quad S = \varepsilon_\kappa A m^2 \sin m x_3
\end{align}
where the amplitude $A$ is an arbitrary constant. We consider the linear evolution of the perturbation temperature $\theta (x, t)$ and velocity $u(x, t)$ and make the assumption that, for some fixed integers $k_1$ and $k_2, \theta (x, t)$ has the form
\begin{align}\label{7.3}
\theta (x, t) = e^{\sigma t} \sin (k_1 x_1) \sin (k_2 x_2) \sum\limits_{n \ge 1} c_n \sin (nx_3)
\end{align}
which from (1.10)-(1.13) implies that the perturbation velocity
\begin{align}\label{7.4}
u_3 (x, t) = e^{\sigma t} \sin (k_1 x_1) \sin (k_2x_2) \sum\limits_{n \ge 1} c_n \ds\frac{N^2 (k^2_1 + k^2_2) [k^2_2 + \varepsilon_\nu (k^2_1 + k^2_2 + n^2)^2]}{N^4n^2 (k^2_1 + k^2_2 + n^2) + [k^2_2 + \varepsilon_\nu (k^2_1 + k^2 + n^2)^2]^2} \sin (nx_3).
\end{align}
Substituting (7.2)-(7.4) into the linearized version of (7.1) gives
\begin{align}
& \sum\limits_{n \ge 1} (\sigma + \varepsilon_\kappa (k^2_1 + k^2_2 + n^2)) c_n \sin (nx_3) \notag\\
& + A m \cos (mx_3) \sum\limits_{n \ge 1} c_n \ds\frac{N^2 (k^2_1 + k^2_2) [k^2_2 + \varepsilon_\nu (k^2_1 + k^2_2 + n^2)^2]}{N^4n^2 (k^2_1 + k^2_2 + n^2) + [k^2_2 + \varepsilon_\nu (k^2_1 + k^2_2 + n^2)^2]^2} \sin (nx_3)= 0.
\label{7.5}\end{align}
We now follow the construction of an unstable eigenvalue $\sigma > 0$ and associated $C^\infty$ smooth eigenfunction satisfying (7.5) which is given in \cite{FV11b} for the inviscid MG equation. A suitable modification of the method of continued fractions utilized for the Navier-Stokes equations by Meshalkin and Sinai \cite{MS61} produces a characteristic equation which gives a positive {\it lower} bound on a real root $\sigma^*$.  This procedure applied to the viscous MG equation demonstrates the existence of an eigenvalue $\sigma^*$ such that
\begin{align}\label{7.6}
& \sigma^* \ge \ds\frac{AmN^2}{2} \ds\frac{(k^2_1+k^2_2)[k^2_2 + \varepsilon_\nu (k^2_1+k^2_2+m^2)^2]}{4N^4m^2(k^2_1+k^2_2+4m^2) + [k^2_2 + \varepsilon_\nu (k^2_1+k^2+4m^2)^2]^2} \notag\\
& - \varepsilon_\kappa (k^2_1+k^2_2+4m^2).
\end{align}
We note that the characteristic equation also produces an upper bound
\begin{align}\label{7.7}
& \sigma^* \le AmN^2 \ds\frac{(k^2_1+k^2_2)[k^2_2 + \varepsilon_\nu (k^2_1+k^2_2+4m^2)^2]}{N^4m^2(k^2_1+k^2_2+m^2) + [k^2_2 + \varepsilon_\nu (k^2_1+k^2_2+m^2)^2]^2} \notag\\
& - \varepsilon_\kappa (k^2_1+k^2_2+m^2).
\end{align}

From (7.6) it follows that we may choose the amplitude $A$ of the steady temperature distribution sufficiently large in terms of $\varepsilon_\kappa, m$ and $N^2$ so that there exists at least one pair of wave numbers $(k_1, k_2)$ that produce a positive eigenvalue $\sigma^*$.

We now discuss the implications of the lower bound (7.6) in terms of the limiting behavior of the small parameters $\varepsilon_\nu$ and $\varepsilon_\kappa$ and the existence eigenvalues whose magnitudes grow as $\varepsilon_\nu$ or $\varepsilon_\kappa$ tend to to zero.

\vskip 1pc

\noindent{\bf Case (i):} both $\varepsilon_\nu = 0$ and $\varepsilon_\kappa = 0$. In this case the estimate (7.6) gives
\begin{align}\label{7.8}
\sigma^* \ge \ds\frac{AmN^2}{2} \ds\frac{(k^2_1 + k^2_2) k^2_2}{4N^4m^2 (k^2_1 + k^2_2 + 4m^2) + k^4_2}.
\end{align}
Choosing $(k_1, k_2) \in \mathbb{Z}^2$ such that $k^2_2 = k_1 = j \ge m \ge 1$ we obtain that for any integer $j \ge m$ there exists an eigenvalue $\sigma^*$ bounded from below as
\begin{align*}
\sigma^* \ge j C
\end{align*}
and the upper bound (7.7) gives
\begin{align*}
\sigma^* \le 8j C
\end{align*}
where $C$ is a constant depending only on $A, m$ and $N^2$. Thus in the absence of viscous or thermal diffusion we can construct eigenvalues with arbitrarily large real part. This property was used in \cite{FV11b} to prove that the Cauchy problem for the inviscid, nondiffusive MG equation is ill-posed in the sense of Hadamard.

\vskip 1pc
\noindent{\bf Case (ii):} $\varepsilon_\nu = 0$ and $0 < \varepsilon_\kappa \ll 1$. In this case (7.6) gives
\begin{align}\label{7.9}
\sigma^* \ge \ds\frac{AmN^2}{2} \ds\frac{(k^2_1 + k^2_2) k^2_2}{4N^4m^2 (k^2_1 + k^2_2 + 4m^2) + k^4_2} - \varepsilon_\kappa (k^2_1 + k^2_2 + 4 m^2).
\end{align}
Hence there are at most finitely many values $(k_1, k_2)$ for which the lower bound is positive. The maximum value of this lower bound as a function of $k_1$ and $k_2$ occurs when
\begin{align}\label{7.10}
k_1 \approx 1/\varepsilon_\kappa \mbox{ and } k_2 \approx 1/\varepsilon^{1/2}_\kappa,
\end{align}
(7.7) and (7.9) give
\begin{align}\label{7.11}
\ds\frac{1}{\varepsilon_\kappa} (AC_1 - 1) \le \sigma^* \le \ds\frac{1}{\varepsilon_\kappa} (8AC_1 - 1)
\end{align}
where $C_1$ is a positive constant depending only on $m$ and $N^2$. Thus the lower bound (7.11) is positive for sufficiently large amplitudes $A$ and grows like $\varepsilon^{-1}_\kappa$ for $0 < \varepsilon_\kappa \ll 1$. Hence the limit of vanishing thermal diffusivity produces very large, but finite, unstable eigenvalues.

\vskip 1pc
\noindent{\bf Case (iii):} $\varepsilon_\kappa = 0$ and $0 < \varepsilon_\nu \ll 1$.  The lower bound (7.6) gives
\begin{align}\label{7.12}
\sigma^* \ge \ds\frac{AmN^2}{2} \ds\frac{(k^2_1 + k^2_2) [k^2_2 + \varepsilon_\nu (k^2_1 + k^2_2 + m^2)^2]}{4N^4m^2 (k^2_1 + k^2_2 + 4m^2) + [k^2_2 + \varepsilon_\nu (k^2_1+ k^2_2 + 4m^2)^2]^2}.
\end{align}
Hence in this case there are unstable eigenvalues for all values of $A$ and for all finite values of $k_1$ and $k_2$. The maximum values of this lower bound as a function of $k_1$ and $k_2$ occurs when
\begin{align}\label{7.13}
k_1 \approx (1/\varepsilon_\nu)^{1/3} \mbox{ and } k_2 \approx (1/\varepsilon_\nu)^{1/6},
\end{align}
(7.7) and (7.12) gives
\begin{align*}
\left(\ds\frac{1}{\varepsilon_\nu}\right)^{1/3} AC_2 \le \sigma^* \le \left(\ds\frac{1}{\varepsilon_\nu}\right)^{1/3} 8AC_2,
\end{align*}
where $C_2$ is a positive constant depending only on $m$ and $N^2$. Thus the limit of vanishing viscosity also produces very large, but finite, unstable eigenvalues.

\vskip 1pc
\noindent{\bf Case (iv):} $0 < \varepsilon_\kappa \ll 1$ and $0 < \varepsilon_\nu \ll 1$. As we observed the bound (7.6) implies that provided $A$ is sufficiently large there always exists unstable eigenvalues for some small values of $k_1$ and $k_2$.  To examine the behavior of the lower bound in terms of the limiting behavior  of the two small parameters we write $\varepsilon_\nu = \varepsilon^\alpha_\kappa$. The maximum value of the lower bound occurs when
\begin{align*}
k_1 \approx 1/\varepsilon_\kappa \mbox{ and } k_2 \approx 1/\varepsilon^{1/2}_\kappa
\end{align*}
where (7.6) gives
\begin{align}\label{7.14}
\sigma^* \ge \ds\frac{1}{\varepsilon_\kappa} \left[\ds\frac{A (C_2 + C_3 \varepsilon^{\alpha-3}_\kappa)}{C_4 + (C_2 + C_3 \varepsilon^{\alpha - 3}_\kappa)^2} - 1\right]
\end{align}
and (7.7) gives
\begin{align*}
\sigma^* \le \ds\frac{1}{\varepsilon_\kappa} \left[\ds\frac{8A (C_2 + C_3 \varepsilon^{\alpha-3}_\kappa)}{C_4 + (C_2 + C_3 \varepsilon^{\alpha - 3}_\kappa)^2} - 1\right]
\end{align*}
where $C_2, C_3$ and $C_4$ are positive constants depending only on $m$ and $N^2$. Thus when $\alpha \ge 3$ we again conclude that for sufficiently large $A$ there exist eigenvalues that grow like $\varepsilon^{-1}_k$ as $\varepsilon_\kappa$ approaches zero. In contrast, when $\alpha < 3$ the bound (7.14) is negative for $0 < \varepsilon_\kappa \ll 1$ and this construction does not produce large unstable eigenvalues, in other words when $\alpha < 3$ the viscosity prevents rapid instabilities. In particular, this observation implies that there will be no ``large" unstable eigenvalues in the context of Section 6 where $\varepsilon_\nu > 0$ and $\lim_{\varepsilon_\kappa} \to 0$.

We note that in each case (ii), (iii) and (iv) $(\alpha \ge 3)$ the maximum lower bound on $\sigma^*$ is attained on the parabolic curves $k_1 = k^2_2$ and this bound grows with $\varepsilon^{-1}_\nu$ or $\varepsilon^{-1}_\kappa$. The limit $\varepsilon_\nu \to 0$ and the limit $\varepsilon_\kappa \to 0$ are consistent with the inviscid, non-diffusive case (i) which permits {\it arbitrarily} large eigenvalues associated with the curves $k_1 = k^2_2$ as $k_1 \to \infty$.

We also note that when $\varepsilon_\kappa > 0$ instability for the linearized equation implies nonlinear Lyapunov instability for the active scalar equation (7.1). This result was proved for the inviscid MG equation in \cite{FV11b} using a modification of the bootstrap techinques applied to the Navier-Stokes equations in \cite{FPS06}. The proof in \cite{FV11b} directly carries over to the viscous MG equation where $\varepsilon_\nu > 0$.

\vskip 1pc

To summarize, the classical geodynamo problem of magnetic field generation via the flow of an electrically conducting fluid is closely connected with the existence of unstable eigenvalues (see, for example, \cite{M83}). We have shown that the Moffatt and Loper geodynamo model permits such instabilities. The active scalar equation (7.1) linearized about a suitable background temperature field supports unstable eigenvalues. If either one of $\varepsilon_\nu$ or $\varepsilon_\kappa$ is zero and the other is very small and positive there exist perturbations that grow very rapidly on certain parabolic curves in wave number space. Such fast instabilities also occur when both $\varepsilon_\nu$ and $\varepsilon_\kappa$ are positive with $\varepsilon_\nu < \varepsilon^3_\kappa$. The existence of very strong instabilities is a consequence of certain features of the structure of the Fourier multiplier symbol $\widehat{M}_3$ that are produced by the specific constitutive law in the model relating the temperature with the velocity and the magnetic field (see equations (1.5)-(1.7)). The crucial features are the anisotropy of the symbol and the fact that it is even with respect to the wave numbers $k_1, k_2$ and $k_3$.  In the case where both $\varepsilon_\kappa$ and $\varepsilon_\nu$ are zero a consequence of the structure of the symbol is the existence of {\it arbitrarily} large eigenvalues. This implies that the MG equation without the control of any diffusion is Hadamard ill-posed, in contrast with the diffusive MG equation which is globally well posed.


\newpage

\subsection*{Acknowledgment} We thank Vlad Vicol and the anonymous referees for very helpful advice and suggestions. S.F. is supported in part by NSF grant DMS-1207780.

\end{document}